\newtheorem{thm}[subsection]{Theorem}
\newtheorem{lem}[subsection]{Lemma}
\newcommand{\CC}{{\mathbb C}}
\newcommand\si{\sigma}
\begin{document}

\baselineskip=16pt
 
\title[Equivariant vector bundles on complete symmetric
varieties]{Equivariant vector bundles on complete symmetric varieties of minimal rank}

\author[I. Biswas]{Indranil Biswas}

\address{Tata Institute of Fundamental Research,
Homi Bhabha Road, Mumbai 500004, India}

\email{indranil@math.tifr.res.in}

\author[S. S. Kannan]{S. Senthamarai Kannan}

\address{Chennai Mathematical Institute, Plot H1, Sipcot IT Park,
Siruseri, Kelambakam, Chennai 603103, India}

\email{kannan@cmi.ac.in}

\author[D. S. Nagaraj]{D. S. Nagaraj}

\address{Institute of Mathematical Sciences, C.I.T. Campus, Taramani, 
Chennai 600113, India}

\email{dsn@imsc.res.in}

\subjclass{14F17}

\keywords{Wonderful compactification, minimal rank, equivariant bundles, nefness.}

\date{}

\begin{abstract}
Let $X$ be the wonderful compactification of a complex symmetric space 
$G/H$ of minimal rank. For a point $x\,\in\, G$, denote by $Z$ be the closure of $BxH/H$ 
in $X$, where $B$ is a Borel subgroup of $G$. The universal cover of $G$
is denoted by $\widetilde{G}$. Given a $\widetilde{G}$ equivariant vector bundle $E$
on $X,$ we prove that $E$ is nef (respectively, ample) if and only if its 
restriction to $Z$ is nef (respectively, ample). Similarly, $E$ is trivial
if and only if its restriction to $Z$ is so.
\end{abstract}

\maketitle

\section{Introduction} \setcounter{page}{1}

Let $\sigma$ be an involution of a semisimple adjoint type algebraic group $G$ over
$\mathbb{C}$, and let $H\,=\, G^\sigma$ be the corresponding fixed point locus.
De Concini and Procesi constructed a smooth projective variety
$$
X\,=\, \overline{G/H}
$$
equipped with an action of $G$, that contains an open dense $G$--orbit 
$G/H$ \cite{DP}. This $X$ is known 
as the wonderful compactification of the symmetric space $G/H$.

Richardson and Springer described the $B$--orbits in $G/H$ 
in terms of the combinatorics of the Weyl group $W,$ where $B$ is a Borel subgroup
of $G$ (see \cite{RS}). The rank of $G/H$ is defined by 
Panyushev \cite{Pa} and Knop \cite{Kn}. 
The minimal rank symmetric spaces were introduced by Brion \cite{Br}. 
Brion and Joshua have studied the geometry of the closures in $X$ of the $B$--orbits
in $G/H,$ whenever $G/H$ is of minimal rank \cite{BJ}. Tchoudjem has 
also studied the closures in $X$ of the $B$--orbits in $G/H$, 
whenever $G/H$ is of minimal rank \cite{Tc}.

This paper deals with the restriction of equivariant vector bundles 
on $X$ to some natural class of subvarieties of $X$, like $B$--orbit closures. 

Let $\widetilde{G}$ be the simply connected covering of $G$. The action of $G$ on $X$
produces an action of $\widetilde{G}$ on $X$ using the natural projection
$\widetilde{G}\, \longrightarrow\, G$. Given an algebraic vector
bundle $E$ on $X$, we can get a class of vector bundles on $X$ by
pulling back $E$ using the automorphisms of $X$ given by the action of
$G$. It can be shown that the isomorphism classes of these pullbacks remain constant
if and only if $E$ admits a $\widetilde{G}$--equivariant structure (meaning
the action of $\widetilde{G}$ on $X$ admits a lift to an action on $E$).

We prove the following (see Theorem \ref{thm1}):

\begin{thm}\label{th1}
Assume that $G/H$ is of minimal rank.
Fix a point $x\,\in\, G.$ Let $Z$ be the closure of $BxH/H$ 
in $X$. Let $E$ be a $\widetilde{G}$ equivariant vector bundle on $X$. Then, 
$E$ is nef (respectively, ample) if and only if the 
restriction of $E$ to $Z$ is nef (respectively, ample). Similarly,
$E$ is trivial if and only if its restriction to $Z$ is trivial.
\end{thm}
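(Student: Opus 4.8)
\noindent\emph{Sketch of the intended argument.}
The ``only if'' implications are immediate, since restricting a nef (respectively, ample, trivial) vector bundle to a closed subvariety preserves each of these properties; so all the work is in the ``if'' directions, and the plan is to prove the nefness statement and reduce the other two to it.

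\noindent\emph{Reducing the ample and trivial statements to the nef one.} I would fix a $\widetilde G$--linearized ample line bundle $L$ on $X$ -- one exists because $\widetilde G$ is semisimple and simply connected, so every line bundle on $X$ is $\widetilde G$--linearizable -- and observe that for each $m\ge 1$ the bundle $E_m:=(S^{m}E)\otimes L^{-1}$ is again $\widetilde G$--equivariant. Since a vector bundle $F$ on a projective variety is ample if and only if $(S^{m}F)\otimes L^{-1}$ is nef for some $m$ (the ``only if'' because $(S^{m}E)\otimes L^{-1}$ is globally generated for $m\gg 0$, the ``if'' because the tensor product of a nef bundle with an ample line bundle is ample), one gets: $E$ is ample on $X$ iff some $E_m$ is nef on $X$, and $E|_Z$ is ample on $Z$ iff some $E_m|_Z=(S^{m}(E|_Z))\otimes(L|_Z)^{-1}$ is nef on $Z$. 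Hence the ample statement follows by applying the nefness statement to the $E_m$. For triviality: if $E|_Z$ is trivial, then $E|_Z$ and $E^{*}|_Z$ are nef, so by the nefness statement $E$ and $E^{*}$ are nef, i.e. $E$ is numerically flat; as $X$ is rational (it contains $G/H$ as a dense open subset) it is simply connected, and a numerically flat vector bundle on a simply connected smooth projective variety is trivial, so $E$ is trivial.

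\noindent\emph{The nefness statement.} I would pass to $p\colon{\mathbb P}(E)\to X$ and $M:=\mathcal O_{{\mathbb P}(E)}(1)$, so that $E$ is nef iff $M$ is nef, and $E|_Z$ is nef iff $M|_{p^{-1}(Z)}$ is nef (here $p^{-1}(Z)={\mathbb P}(E|_Z)$). The $\widetilde G$--equivariant structure on $E$ yields a $\widetilde G$--action on ${\mathbb P}(E)$ together with a $\widetilde G$--linearization of $M$; since $Z$ is $B$--stable, $p^{-1}(Z)$ is $\widetilde B$--stable, and for every $g\in\widetilde G$ translation by $g$ carries $p^{-1}(Z)$ isomorphically onto $p^{-1}(gZ)$ and $M$ to $M$. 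Thus, once $M|_{p^{-1}(Z)}$ is nef, $M$ is nef on every member of the family $\{p^{-1}(gZ)\}_{g\in\widetilde G}$, and this family covers ${\mathbb P}(E)$: indeed $\widetilde G\times^{\widetilde B}Z\to X$ is proper (its source fibres over the complete variety $\widetilde G/\widetilde B$ with complete fibre $Z$), so has closed image, which contains the dense orbit $G/H$ because $\bigcup_{g\in G}g\cdot(BxH/H)=G/H$; hence $\bigcup_{g}gZ=X$. Put differently, for the proper surjective morphism $\phi\colon\widetilde G\times^{\widetilde B}p^{-1}(Z)\to{\mathbb P}(E)$, $[g,q]\mapsto g\cdot q$, the equivariant structure identifies $\phi^{*}M$ with the line bundle $\widetilde G\times^{\widetilde B}(M|_{p^{-1}(Z)})$, and $M$ is nef iff $\phi^{*}M$ is nef.

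The remaining -- and hardest -- step is to deduce nefness of $M$ on ${\mathbb P}(E)$ from nefness of $M$ along the covering pieces $p^{-1}(gZ)$; a covering family does not, by itself, detect nefness (there is positivity ``in the $\widetilde G/\widetilde B$--direction'' still to be controlled), so this is where I expect to need the fine structure of the $B$--orbit closure $Z$ in the minimal rank case, as developed by Brion--Joshua and Tchoudjem. In that case $Z$ is smooth, meets every $\widetilde G$--orbit of $X$, contains a $\widetilde G$--translate of the closure $\overline T$ of a $\sigma$--stable maximal torus, and is built from $\overline T$ -- a smooth projective toric variety -- by a sequence of ${\mathbb P}^{1}$--bundle steps. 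My plan would be an induction down this tower: at each ${\mathbb P}^{1}$--bundle step nefness is governed by restriction to fibres and to sections, and at the bottom one is reduced to a $\widetilde T$--equivariant bundle on the toric variety $\overline T$, whose nefness is combinatorial and is tested on the finitely many $\widetilde T$--invariant rational curves -- curves whose $\widetilde G$--translates lie inside $Z$. Propagating this back up the tower should show that every curve obstructing nefness of $M$ has, after applying the $\widetilde G$--action, a representative inside $p^{-1}(Z)$, giving that $M$, hence $E$, is nef; the ample and trivial statements then follow as above. The main obstacle is exactly this last step -- pinning down, in the minimal rank case, which numerical curve classes on $X$ must be tested and checking that all of them are, up to the $\widetilde G$--action, realized inside $Z$.
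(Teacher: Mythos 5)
Your handling of the easy direction and of the reductions of the ample and trivial statements to the nef one is sound and essentially matches the paper: the paper also twists $\mathrm{Sym}^n(E)$ by $L^{-1}$ for a suitable $n$ to get ampleness from nefness, and also deduces triviality from the nefness of $E$ and $E^*$ via the Demailly--Peternell--Schneider filtration, Simpson's result on flat connections, and simple connectedness of the unirational variety $X$. The problem is that the core of the theorem --- the nefness statement --- is not actually proved in your sketch; you correctly identify that a covering family $\{gZ\}_{g}$ does not detect nefness and then defer the resolution to an induction on a conjectural $\mathbb{P}^1$--bundle tower for $Z$, explicitly flagging it as ``the main obstacle.'' That route is unlikely to close the gap as stated: understanding the internal structure of $Z$ controls curves \emph{inside} translates of $Z$, but an irreducible curve of $X$ (or of $\mathbb{P}(E)$) need not lie in any single translate $gZ$, so no amount of analysis of $Z$ alone tells you which curve classes of $X$ must be tested.

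The two missing ingredients, which are exactly what the paper supplies, are: (i) a degeneration argument showing that an arbitrary irreducible curve $\widetilde C\subset\mathbb{P}(E)$ is linearly equivalent to a one-cycle $\widetilde C_n$ whose image in $X$ is $T$--invariant --- this is done by choosing a $\mathbb{Z}$--basis $\lambda_1,\dots,\lambda_n$ of the one-parameter subgroups of $\widetilde T$ and taking successive flat limits $\lim_{t\to 0}\lambda_i(t)\widetilde C_{i-1}$ in the Hilbert scheme of $\mathbb{P}(E)$, preserving the degree of $\mathcal{O}_{\mathbb{P}(E)}(1)$; and (ii) the geometric fact (Lemma \ref{lem6} of the paper) that \emph{every} irreducible $T$--stable curve of $X$ lies in $W(G,T)\cdot Z$, which is where the minimal rank hypothesis enters, via the Brion--Joshua classification of $T$--stable curves into the two types $\overline{U_\alpha s_\alpha z}$ and $C_{z,\gamma}\subset\overline{TH/H}$, together with the uniqueness and the explicit form $BwH/H$ of the closed $B$--orbit. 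With (i) and (ii) in hand, nefness on $Z$ propagates by equivariance to all $T$--stable curves and hence, by the degeneration, to all curves; without them your argument stops exactly where you say it does. You should replace the $\mathbb{P}^1$--bundle induction with this degeneration-to-torus-fixed-curves mechanism (which is also the mechanism of Hering--Musta\c{t}\u{a}--Payne in the toric case you cite).
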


In \cite{HMP}, a similar result is proved for vector bundles on toric varieties.

Before stating the next result, we recall that for the conjugation action 
of $\widetilde{G}$ on itself, Steinberg proved that for a
maximal torus $T$ of $G$, the restriction homomorphism
$$
\mathbb{C}[\widetilde{G}]^{\widetilde{G}}\,\longrightarrow\, \mathbb{C}
[\widetilde{T}]^{W(G,T)}
$$
is an isomorphism, where
$\widetilde{T}$ is the inverse image of $T$ in 
$\widetilde{G}$ and $W(G,T)$ is the Weyl group of $G$ with respect to $T$
\cite{St1}. Hence, we have the Steinberg map
$$
\tau\,:\, \widetilde{G}\,\longrightarrow \,\widetilde{T}/{W(G,T)}\,=\,{\mathbb A}^{n}\, .
$$

Let $c$ be a Coxeter element in the Weyl group $W(G,T)$, and let $F$ 
be the  fiber of the Steinberg map $\tau$ containing a 
representative $n_{c}$ of $c$ in $N_{\widetilde{G}}(\widetilde{T})$. Let 
$F^{\prime}$ be the image of $F$ in $G$. Set ${\mathcal Z} \,= \, Z_1\bigcup Z_2$,
where $Z_1$ is the closure of $F^{\prime}$ in 
the wonderful compactification $\overline{G}$ of $G$, and
$Z_2$ is the unique closed $G\times G$ orbit in $\overline{G}$ \cite{DP}.

The group $\widetilde{G}\times \widetilde{G}$ acts on $\overline{G}$ which factors
through the action of $G\times G$ on $\overline{G}$. Given an algebraic vector bundle
$E$ on $\overline{G}$, the isomorphism classes of its translates by the elements of
$G\times G$ remain constant if and only if $E$ admits a $\widetilde{G}\times
\widetilde{G}$--equivariant structure.

We also prove the following (see Theorem \ref{thm2}):

\begin{thm}\label{th2}
Let $E$ be a $\widetilde{G}\times \widetilde{G}$ equivariant vector bundle on
$\overline{G}$. Then, $E$ is nef (respectively, ample) if and only 
if the restriction of $E$ to ${\mathcal Z}$ is nef (respectively, ample). Similarly,
$E$ is trivial if and only if $E\vert_{\mathcal Z}$ is trivial.
\end{thm}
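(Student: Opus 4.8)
\medskip
\noindent\textbf{Proof proposal.}
One direction is immediate: the restriction of a nef (respectively, ample, trivial) bundle to a closed subvariety is again nef (respectively, ample, trivial). So the content is in the three converse implications. The plan is to exploit that $\overline{G}$ is the wonderful compactification of the symmetric space $(G\times G)/\Delta G$, which is of minimal rank, since $\mathrm{rank}\big((G\times G)/\Delta G\big)=\mathrm{rank}(G)=\mathrm{rank}(G\times G)-\mathrm{rank}(\Delta G)$; here $\widetilde G\times\widetilde G$ is the simply connected cover and $B\times B$ is a Borel subgroup, so we are in the situation of Theorem~\ref{th1}, although ${\mathcal Z}$ itself is reducible, hence not a $(B\times B)$--orbit closure, and we must argue directly.

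The geometric heart of the argument — and the only place the Coxeter element and Steinberg's theorem are used — is the inclusion
\[
\overline{n_{c}}\cdot\overline{T}\ \subseteq\ Z_{1},
\]
where $\overline{T}$ is the closure of the maximal torus $T$ in $\overline{G}$ (a complete toric variety) and $\overline{n_{c}}$ is the image of $n_{c}$ in $G$. Indeed, a Coxeter element $c$ has no eigenvalue $1$ in its action on $X_{*}(\widetilde T)\otimes{\mathbb Q}$, so $c^{-1}-1$ is invertible there and the endomorphism $t\mapsto (n_{c}^{-1}tn_{c})\,t^{-1}$ of $\widetilde T$ is surjective; a one--line computation then shows that $n_{c}\widetilde T$ is a single $\widetilde T$--conjugacy class, hence is contained in one fibre of $\tau$, so $n_{c}\widetilde T\subseteq F$, whence $\overline{n_{c}}\,T\subseteq F^{\prime}$ and, after taking closures in $\overline{G}$, the displayed inclusion. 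Since moreover each $(T\times T)$--translate $\dot{w}\,\overline{T}\,(\dot{w}')^{-1}$ ($w,w'\in W$) is $(T\times T)$--stable and is carried onto $\overline{n_{c}}\cdot\overline{T}$ by a suitable element of $G\times G$, we obtain the key lemma: \emph{every $(T\times T)$--invariant curve of $\overline{G}$ is $(G\times G)$--conjugate to a curve contained in ${\mathcal Z}=Z_{1}\cup Z_{2}$} — the invariant curves meeting the closed orbit already lie in $Z_{2}$, while the remaining ones lie inside some translate of $\overline{T}$ and hence are conjugate into $Z_{1}$.

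Granting this, the nef and ample statements follow by the method of \cite{HMP}. Pass to $\mathbb{P}(E)$: the bundle $E$ is nef (respectively, ample) if and only if $\mathcal{O}_{\mathbb{P}(E)}(1)$ is nef (respectively, ample), and since $E$ carries a $\widetilde G\times\widetilde G$--equivariant structure the projection $p\colon\mathbb{P}(E)\to\overline{G}$ is equivariant with $\mathbb{P}(E|_{\mathcal Z})=p^{-1}({\mathcal Z})$. Applying Borel's fixed point theorem to the $(T\times T)$--action on the Chow variety, every irreducible curve of $\mathbb{P}(E)$ is numerically equivalent to an effective sum of $(T\times T)$--invariant curves; thus $\overline{NE}(\mathbb{P}(E))$ is a polyhedral cone generated by the classes of the finitely many $(T\times T)$--invariant curves, so it suffices to control $\mathcal{O}_{\mathbb{P}(E)}(1)$ along these — directly for nefness, and by Kleiman's criterion for ampleness. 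Such a curve either lies in a fibre of $p$, where $\mathcal{O}_{\mathbb{P}(E)}(1)$ is relatively ample and its degree is positive, or it lies over a $(T\times T)$--invariant curve of $\overline{G}$; in the latter case the key lemma, together with the equivariant lift to $\mathbb{P}(E)$ of the relevant element of $G\times G$, moves it into $p^{-1}({\mathcal Z})=\mathbb{P}(E|_{\mathcal Z})$, so that nefness (respectively, ampleness) of $E|_{\mathcal Z}$ forces $\mathcal{O}_{\mathbb{P}(E)}(1)$ to have non-negative (respectively, positive) degree there, and hence on the original curve. Therefore $E|_{\mathcal Z}$ nef (respectively, ample) implies $E$ nef (respectively, ample).

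For triviality I would use that a numerically flat vector bundle on a smooth projective rational variety is trivial: by the Demailly--Peternell--Schneider structure theorem such a bundle is an iterated extension of Hermitian flat bundles, which are trivial because the variety is simply connected, and the extensions split since $H^{1}(\overline G,\mathcal{O}_{\overline G})=0$. Now $\overline{G}$ is such a variety. If $E|_{\mathcal Z}$ is trivial, then $E|_{\mathcal Z}$ and $E^{\vee}|_{\mathcal Z}$ are nef and $\det(E)|_{\mathcal Z}$ is trivial; by the nef case $E$ and $E^{\vee}$ are nef, and since the restriction $\mathrm{Pic}(\overline{G})\to\mathrm{Pic}(Z_{2})$ to the closed orbit is injective, $\det(E)$ is trivial; hence $E$ is numerically flat, and therefore trivial. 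The expected main obstacle is the key lemma of the second paragraph: the classification of the $(T\times T)$--invariant curves of $\overline{G}$, showing each meets the closed orbit or lies in a translate of $\overline{T}$, rests on the local structure of $\overline{G}$ along its $G\times G$--orbits and needs a little extra care for curves inside the intermediate boundary orbits. The remaining ingredients are routine; note that it is precisely the equivariance under the simply connected cover $\widetilde G\times\widetilde G$, rather than $G\times G$, that supplies the $(T\times T)$--action on $\mathbb{P}(E)$ used throughout.
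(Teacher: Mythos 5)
Your proposal follows essentially the same route as the paper: the heart of the matter is the same key lemma (every $(T\times T)$--stable curve of $\overline{G}$ is a Weyl--group translate of a curve in ${\mathcal Z}$), proved from the same two ingredients --- the Brion--Joshua classification of invariant curves and the surjectivity of the commutator map $t\mapsto t n_c t^{-1} n_c^{-1}$ on $\widetilde T$, which puts $n_c\overline{T}$ inside $Z_1$. Your eigenvalue argument for that surjectivity (a Coxeter element has no eigenvalue $1$) is a legitimate variant of the paper's computation with the roots $\beta_i=\omega_i-c(\omega_i)$, and your triviality argument (numerical flatness plus Demailly--Peternell--Schneider plus simple connectedness) matches the paper's, which routes through Simpson's flat-connection theorem instead of the $H^1(\mathcal{O})=0$ splitting. (One small imprecision: the curves $C_{z,\gamma}$ of type 2 also pass through the $B$--fixed point $z$ of the closed orbit, so the dichotomy should be ``contained in the closed orbit'' versus ``contained in a translate of $\overline{T}$,'' not ``meeting'' it.)

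The one place where your argument is genuinely weaker than the paper's is the ampleness step. You assert that $\overline{NE}(\mathbb{P}(E))$ is a polyhedral cone generated by the classes of ``the finitely many $(T\times T)$--invariant curves'' and then invoke Kleiman. But $\mathbb{P}(E)$ can contain positive--dimensional families of invariant curves (e.g.\ all lines in a fibre $\mathbb{P}(E_x)$ over a fixed point on which the torus acts by scalars), so finiteness is false as stated; and without an actual finite set of generating classes, positivity of $\mathcal{O}_{\mathbb{P}(E)}(1)$ on each individual curve does not give ampleness --- positivity on every curve is strictly weaker than Kleiman's hypothesis (Mumford's example). What is needed is either a genuine analysis of which classes of invariant curves in $\mathbb{P}(E)$ can occur, or the paper's workaround: the degeneration argument proves only the nef statement, and ampleness is then reduced to nefness by fixing an ample line bundle $L$ on $\overline{G}$, choosing $n>\deg(L|_C)$ for every invariant curve $C$, and observing that $\mathrm{Sym}^n(E)\otimes L^{-1}$ restricted to ${\mathcal Z}$ is nef, hence $\mathrm{Sym}^n(E)\otimes L^{-1}$ is nef, hence $\mathrm{Sym}^n(E)$ and therefore $E$ is ample by Hartshorne's criterion. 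You should replace the Kleiman appeal by this twisting argument (or supply the missing cone analysis).
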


\section{Preliminaries}

\subsection{Lie algebras and Algebraic groups}

In this subsection we recall some basic facts and notation on Lie algebras
and algebraic groups (see \cite{Hu},\cite{Hu1} for details). Throughout $G$ denotes a
semisimple adjoint-type algebraic group over the field $\mathbb{C}$ of complex numbers. In
particular, the center of $G$ is trivial. For a maximal torus $T$ of $G$, the group of all
characters of $T$ will be denoted by $X(T)$. The normalizer of $T$ in $G$ will be denoted
by $N_{G}(T)$, while $$W(G,T)\ :=\, N_{G}(T)/T$$ is the Weyl group of $G$ with respect to $T$.
Let $R \,\subset \,X(T)$ be the root system of $G$ with respect to $T$.
For a Borel subgroup $B$ of $G$ containing $T$, let 
$R^{+}(B)$ denote the set of positive roots determined by $T$ and $B$. Further,
$$
S \,=\, \{\alpha_1,\cdots,\alpha_n\}
$$ 
denotes the set of simple roots in $R^{+}(B)$. For $\alpha \,\in\, R^{+}(B)$, let 
$s_{\alpha} \,\in \, W(G,T)$ be the 
reflection corresponding to $\alpha$. The Lie algebras of
$G$, $T$ and $B$ will be denoted by $\mathfrak{g}$, $\mathfrak{t}$ and $\mathfrak{b}$
respectively. The dual of the real form $\mathfrak{t}_{\mathbb R}$ of $\mathfrak{t}$ is
$X(T)\otimes \mathbb{R}\,=\, Hom_{\mathbb{R}}(\mathfrak{t}_{\mathbb{R}},\, \mathbb{R})$.
 
The positive definite $W(G,T)$--invariant form on $Hom_{\mathbb{R}}(\mathfrak{t}_{\mathbb{R}},
\, \mathbb{R})$ induced by the Killing form on $\mathfrak{g}$ is denoted by $(-\, ,-)$. 
We use the notation $$\langle \nu\, , \alpha \rangle \,:= \,\frac{2(\nu,
\alpha)}{(\alpha,\alpha)}\, .$$ In this setting one has the Chevalley basis 
$$\{x_{\alpha}, h_{\beta} \,\mid\, \alpha \,\in\,
R,\, ~ \beta \,\in\, S\}$$ of $\mathfrak{g}$ determined by $T$.
For a root $\alpha$, we denote by 
$U_{\alpha}$ (respectively, $\mathfrak{g}_{\alpha}$) the one--dimensional
$T$ stable root subgroup of $G$ (respectively, the 
subspace of $\mathfrak{g}$) on which $T$ acts through the 
character $\alpha$. 

Let $\sigma$ be an algebraic automorphism of $G$ of order two. Let $H\, =\, G^\sigma$
be the subgroup consisting of all fixed points of $\sigma$ in $G$. The connected component
of $H$ containing the identity element will be denoted by $H^0$.
We refer to \cite{Ri} and \cite{RS} for following facts. 

A torus $T^{\prime}$ of $G$ is said to be $\sigma$-anisotropic if 
$\si(t) \,= \,t^{-1}$ for every $t\,\in\, T^{\prime}$.
Recall that the rank of $G/H$ is defined to be
the dimension of a maximal dimensional anisotropic torus.

If $T$ is a $\sigma$-stable maximal torus of $G,$ then $\sigma$ induces an automorphism of 
$X(T)$ of order two. Note that we have $\sigma(R)\,=\,R$. Further, one has
$T\,=\,T_{1}T_{2},$ where $T_{1}$ is a torus such that $\sigma(t)\,=\,t$ 
for every 
$t\,\in\, T_{1},$ and $T_{2}$ is a $\sigma$-anisotropic torus. Clearly $T_{1}\bigcap T_{2}$ is finite. 
Hence, we have ${\rm rank}(G/H)\,\geq\, {\rm rank}(G)-{\rm rank}(H)$.

Throughout, we assume that $G/H$ is of minimal rank, or in other words
$$
{\rm rank}(G/H)\,=\,{\rm rank}(G)-{\rm rank}(H)\, .
$$
We refer to \cite{Br} and \cite{Kn1} for facts about minimal rank.

The following lemma may be known, but for the sake of completeness 
we provide a proof here. 
 
\begin{lem}\label{lem1}
\mbox{}
\begin{enumerate}
\item
Any two $\sigma$ stable maximal tori of $G$ are conjugate by an element of the
connected component $H^{0}$ of $H$ containing the identity element.

\item
Any maximal torus $S$ of $H^{0}$ is contained in a unique maximal torus $T$ 
of $G$. Further, this $T$ is $\sigma$ stable.
\item
Any Borel subgroup $Q$ of $H^{0}$ is contained in a $\sigma$ stable Borel
subgroup $B$ of $G$. Further, the Borel subgroup of $G$
containing $Q$ is unique.
\end{enumerate}
\end{lem}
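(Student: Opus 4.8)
The plan is to deduce all three statements from one observation about $\sigma$-stable maximal tori. \emph{Under the minimal rank hypothesis, every $\sigma$-stable maximal torus $T = T_1 T_2$ of $G$ satisfies $\dim T_1 = \mathrm{rank}(H)$ and $\dim T_2 = \mathrm{rank}(G/H)$.} This is a dimension count: $T_1 \subseteq H$ gives $\dim T_1 \le \mathrm{rank}(H)$, the definition of the rank gives $\dim T_2 \le \mathrm{rank}(G/H)$, and $\dim T_1 + \dim T_2 = \mathrm{rank}(G) = \mathrm{rank}(H) + \mathrm{rank}(G/H)$ forces equality in both. Consequently $(T^\sigma)^0 = T_1$ is a maximal torus of $H^0$, and $T_2$ is a maximal $\sigma$-anisotropic torus of $G$.

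For part (2), given a maximal torus $S$ of $H^0$, I would consider $M := Z_G(S)$. Since $S \subseteq G^\sigma$ is fixed pointwise by $\sigma$, the group $M$ is $\sigma$-stable, connected, and reductive, with $S$ central in $M$; and since the centralizer of a maximal torus of $H^0$ in $H^0$ is that torus, $(M^\sigma)^0 = S$. Now the derived group $M' = [M,M]$ is $\sigma$-stable and semisimple, and $((M')^\sigma)^0 \subseteq M' \cap (M^\sigma)^0 = M' \cap S$, which is finite because $S$ is central; hence $(M')^\sigma$ is finite, so $\sigma$ acts as $-\mathrm{id}$ on $\mathrm{Lie}(M')$, and as $-\mathrm{id}$ respects the bracket only on an abelian Lie algebra this forces $\mathrm{Lie}(M') = 0$. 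Thus $M = Z_G(S)$ is a torus, and any maximal torus of $G$ containing $S$ must equal $T := Z_G(S)$; this gives both existence and uniqueness, and $\sigma(T) = Z_G(\sigma S) = T$ is $\sigma$-stable. (This argument does not actually use minimal rank.) Part (1) now follows: for $\sigma$-stable maximal tori $T, T'$, the observation above makes $S := (T^\sigma)^0$ and $S' := ((T')^\sigma)^0$ maximal tori of $H^0$, so $T = Z_G(S)$ and $T' = Z_G(S')$ by (2); choosing $h \in H^0$ with $hSh^{-1} = S'$ gives $hTh^{-1} = Z_G(S') = T'$.

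For part (3), let $Q$ be a Borel subgroup of $H^0$, let $S \subseteq Q$ be a maximal torus, and let $T := Z_G(S)$ be the $\sigma$-stable maximal torus from (2). Two facts are used repeatedly: no root of $G$ restricts to $0$ on $S$ (else its root space would lie in $Z_{\mathfrak g}(S) = \mathrm{Lie}(T)$), and $\sigma$ acts trivially on $X(S)$, so $(\sigma\alpha)|_S = \alpha|_S$ for all $\alpha \in R$. Let $R_H^+ \subseteq X(S)$ be the positive system of $(H^0,S)$ determined by $Q$ and set $\Psi := \{\alpha \in R : \alpha|_S \in R_H^+\}$. Choose a linear functional $\ell_H$ on $X(S)\otimes\mathbb{R}$ positive on $R_H^+$ and nonzero on the finite set $\{\alpha|_S : \alpha \in R\}$, and let $\ell$ be its composition with the restriction $X(T)\otimes\mathbb{R} \to X(S)\otimes\mathbb{R}$. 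Then $\ell$ vanishes on no root and is positive on $\Psi$, so $R^+(B) := \{\alpha : \ell(\alpha) > 0\}$ is the positive system of a Borel subgroup $B \supseteq T$ with $\Psi \subseteq R^+(B)$; since the $S$-weights of $\mathfrak u_Q$ all lie in $R_H^+$, we get $\mathfrak u_Q \subseteq \bigoplus_{\alpha \in \Psi}\mathfrak g_\alpha \subseteq \mathfrak u_B$, hence $Q \subseteq B$. As $\ell$ factors through $X(S)$, on which $\sigma$ is trivial, $\ell \circ \sigma = \ell$, so $\sigma$ preserves $R^+(B)$ and $B$ is $\sigma$-stable. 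For uniqueness, any Borel $B''$ of $G$ containing $Q$ also contains a maximal torus through $S$, which must be $Z_G(S) = T$, so $\Psi \subseteq R^+(B'')$; if $B'' \ne B$ there is a root $\gamma$ with $\gamma \in R^+(B)$ and $-\gamma \in R^+(B'')$, and then neither $\gamma$ nor $-\gamma$ lies in $\Psi$, i.e. $\gamma|_S \notin R_H := R_H^+ \cup (-R_H^+)$.

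The crux, and the step I expect to be the main obstacle, is to exclude such a $\gamma$, i.e. to show every root of $G$ restricts to a root of $H^0$. If $\sigma\gamma \ne \gamma$ this is easy: $\sigma$ interchanges the two lines of the $\sigma$-stable plane $\mathfrak g_\gamma \oplus \mathfrak g_{\sigma\gamma}$, so its $\sigma$-fixed part is a nonzero subspace of $\mathfrak h$ of $S$-weight $\gamma|_S$, whence $\gamma|_S \in R_H$. If $\sigma\gamma = \gamma$, then $\sigma$ acts on the line $\mathfrak g_\gamma$ by $\pm 1$; the value $+1$ puts $\mathfrak g_\gamma \subseteq \mathfrak h$, so $\gamma|_S$ is a nonzero $S$-weight of $\mathfrak h$ and lies in $R_H$, whereas the value $-1$ (a ``noncompact imaginary'' root) must be ruled out, and this is precisely where minimal rank enters. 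In that case $\gamma$ is trivial on the anisotropic torus $T_2$ (since $\sigma$ fixes $\gamma$ and inverts $T_2$), so $T_2$ centralizes the $\sigma$-stable rank-one subgroup $G_\gamma = \langle U_\gamma, U_{-\gamma}\rangle$; the involution induced on $G_\gamma \cong \mathrm{SL}_2$ or $\mathrm{PGL}_2$ is trivial on its maximal torus but not on $G_\gamma$, and a direct analysis of this small case produces a one-dimensional $\sigma$-anisotropic torus of $G_\gamma$ meeting $T_2$ in a finite subgroup; together with $T_2$ this gives a $\sigma$-anisotropic torus of $G$ of dimension $\mathrm{rank}(G/H) + 1$, which is impossible. (Equivalently, such a root would make $T$ fail to be maximally split, contradicting the observation in the first paragraph; the relevant structure theory of $\sigma$-stable tori and Cayley transforms is in \cite{RS}.) Hence no such $\gamma$ exists, so $B'' = B$, completing part (3).
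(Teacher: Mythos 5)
Your proof is correct and reaches the same three structural pillars as the paper's --- the identity $T=Z_G(S)$, the reduction of (1) to (2), and the use of minimal rank to forbid a root $\gamma$ with $\sigma\gamma=\gamma$ and $\sigma=-1$ on $\mathfrak g_\gamma$ --- but it executes them more self-containedly. Where the paper simply cites Richardson (Lemmas 5.3--5.4) for the fact that the centralizer of a maximal torus of $H^0$ is a $\sigma$-stable maximal torus of $G$, you prove it directly via the derived group of $Z_G(S)$ (and you are right that this step needs no minimal rank). For existence in (3), the paper invokes Steinberg's existence of a $\sigma$-stable Borel subgroup plus Richardson's result that $(B'\cap H)^0$ is a Borel subgroup of $H^0$, then conjugates; you instead build $B$ from a generic linear functional pulled back from $X(S)\otimes\mathbb R$, which buys $\sigma$-stability for free since $\sigma$ is trivial on $X(S)$. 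For uniqueness, the paper conjugates $B_1$ to $B$ by a Weyl group element and compares positive systems by splitting roots into $\sigma$-moved and $\sigma$-fixed; your version (every root of $G$ restricts to a root of $(H^0,S)$, so $Q$ determines the positive system) is the same dichotomy repackaged, and the crux is handled by the same mechanism in both: a noncompact imaginary root would produce an anisotropic torus commuting with $T_2$, which minimal rank forbids --- the paper phrases the final contradiction through Richardson's Lemma 5.4 and singularity of $T_\alpha^\sigma$, while you phrase it as exceeding $\dim T_2=\operatorname{rank}(G/H)$. Two small points to tighten: the inclusion $\Psi\subseteq R^+(B'')$ only follows a priori for the subset of $\alpha\in\Psi$ whose root space receives a nonzero projection from $\mathfrak h$ (the exceptions are exactly the noncompact imaginary roots, so your case analysis still closes the argument, but the logical order should be made explicit); and the ``direct analysis of the small case'' in $\mathrm{SL}_2/\mathrm{PGL}_2$ should be written out, though the paper is no more detailed at the corresponding step.
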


\begin{proof}
Proof of (1). Let $T_{1}$ and $T_{2}$ be two $\sigma$ stable maximal tori in $G$.
Define $$S_{i}\, :=\,(T_{i}\cap H)^{0}\, ,$$ $i\,=\, 1\, ,2$. Since $G/H$ is of minimal rank, 
$S_{1}$ and $S_{2}$ are maximal tori in $H^{0}$. Hence, there is an element $h\,\in\, H^{0}$
such that $hS_{1}h^{-1}\,=\,S_{2}$. Also, $T_{i}\,=\,C_{G}(S_{i})$
(see \cite[p. 295, Lemma 5.3 and Lemma 5.4]{Ri}).

Proof of (2). Take $T\,=\,C_{G}(S)$.

Proof of (3). We will first prove the existence of a stable Borel subgroup containing $Q$.

By \cite[p. 51, Lemma 7.5]{St2}, there is a 
$\sigma$ stable Borel subgroup $B^{\prime}$ of $G$. 
By \cite[p. 295, Lemma 5.1]{Ri}, the intersection
$(B^{\prime}\bigcap H)^{0}$ is a Borel subgroup of $H^{0}$. Hence, there is a 
$h\,\in\, H^{0}$ such that $Q\,=\,h(B^{\prime}\bigcap H)^{0}h^{-1}$. Now take 
$B\,=\,hB^{\prime}h^{-1}$.

To prove the uniqueness of $B,$ let $B_{1}$ be a Borel subgroup of $G$ containing
$Q$. As shown above, there is a $\sigma$ stable Borel subgroup $B$ of $G$ containing $Q$.
Choose a maximal torus $S$ of $H^{0}$ lying in $Q$. From part (2) of the lemma we know
that $T\,=\,C_{G}(S)$ is the unique maximal torus of $G$ containing $S$. Hence, $T$ is 
contained in both $B_{1}$ and $B$. Thus, there is a $w\,\in\, W(G,T)$
such that $wBw^{-1}\,=\,B_{1}$. 

We now prove that $R^{+}(B_{1})\,=\,R^{+}(B)$.
Let $\alpha \,\in\, R^{+}(B)\setminus R^{+}(B)^{\sigma}$. Then the $\sigma$ invariant 
vector $x_{\alpha}+\sigma(x_{\alpha})$ is in the Lie algebra of $Q$. 
Hence, $x_{\alpha}+\sigma(x_{\alpha})$ is in the Lie algebra of $B_{1}$.
Thus, both $\alpha$ and $\sigma(\alpha)$ are in 
$R^{+}(B_{1})\setminus R^{+}(B_{1})^{\sigma}$. Hence, we have 
$$
R^{+}(B)\setminus R^{+}(B)^{\sigma}\,=\,R^{+}(B_{1})\setminus R^{+}(B_{1})^{\sigma}\, .
$$ 

Now, let $\alpha\,\in\, R^{+}(B)^{\sigma}$. 
We will show that $\sigma$ acts trivially on $U_{\alpha}$. Let $T_{\alpha} \,\subset\, T$
be the connected component, containing the identity element, of the kernel of $\alpha$.
Consider the restriction of $\sigma$ to $C_{G}(T_{\alpha})$.
Let $C^{\prime}$ be the commutator subgroup of $C_{G}(T_{\alpha})$.
If the action of $\sigma$ on $U_{\alpha}$ is not trivial, it follows that 
there is a one-dimensional $\sigma$ stable anisotropic torus $S^{\prime}$ in 
$C^{\prime}$. Let $T_{1}\,=\,S^{\prime}T_{\alpha}$. Then we have 
$T_{1}^{\sigma}\,=\,T_{\alpha}^{\sigma}$. 
Hence by \cite[Lemma 5.4]{Ri} we have $T_1\,= \,C_G(T_{\alpha}^{\sigma})$.
But this contradicts the fact that $T_{\alpha}^{\sigma}$ is a singular torus.
Hence we have $U_{\alpha}\subset (B)^{\sigma} = Q \subset B_1$.

Thus, we have shown that $R^{+}(B)\,=\,R^{+}(B_{1})$. Hence, we have $B\,=\,B_{1}$.
This completes proof.
\end{proof}

\subsection{Nef vector bundle}\label{sec2.1}

Let $E$ be an algebraic vector bundle over a complex projective variety $Y$. Let
${\mathbb P}(E)$ denote the associated projective bundle over $Y$ whose fiber
over any point $y\, \in\, Y$ is the space of all one-dimensional quotients of
the fiber $E_y$ of $E$ over $y$. The line bundle over ${\mathbb P}(E)$ whose fiber over any
one-dimensional quotient is the one-dimensional quotient itself, will be denoted
by ${\mathcal O}_{{\mathbb P}(E)}(1)$.

A line bundle $L$ over $Y$ is called \textit{nef} if for every pair $(C\, ,\varphi)$, where
$C$ is an irreducible smooth complex projective curve and $\varphi\, :\, C\,
\longrightarrow\, Y$ is a morphism, the degree of the pullback $\varphi^*L$ is nonnegative.
A vector bundle $E\, \longrightarrow\, Y$ is called \textit{nef} if the above line bundle
${\mathcal O}_{{\mathbb P}(E)}(1)$ over ${\mathbb P}(E)$ is nef.

\section{Restriction of equivariant vector bundles to $B$-orbit closure}

Let $T$ be a $\sigma$ stable maximal torus of $G.$  Let $B$ be a Borel subgroup of $G$ containing
$T$ such that for any root $\alpha\,\in\, R^{+}(B)$,
either $\sigma(\alpha)\,=\,\alpha$ or $\sigma(\alpha)\,\in\, -R^{+}(B).$ 

Let
$$
X \,:= \,\overline{G/H}
$$
be the wonderful compactification of the symmetric space
$G/H$ constructed in \cite{DP}. Let $Z$ be the 
closure in $X$ of the $B$--orbit of a point in $G/H$. 

Let $P$ be the parabolic subgroup of $G$ containing $B$ such that the 
$G/P$ is the unique closed $G$ orbit in $X$ (see \cite{DP}). In this 
case, $\sigma(P)$ is opposite to $P$ and $P\bigcap \sigma(P)$ is the Levi
subgroup $L$ of $P$. Let $R(L)$ denote the roots of $L$ with respect to $T$. 

The following lemma is about a $B$--orbit in $G/H$. We refer to \cite{RS}
for information on $B$--orbit closures in $G/H$. For any algebraic group
acting on variety, it is well known that there is always a closed orbit.
For instance, any orbit of minimal dimension is 
closed (see, \cite[ p. 60. Proposition]{Hu1}).

\begin{lem}\label{lem3}
Let $x\,\in\, G$ be such that $B\cdot xH/H$ is closed in $G/H.$ Then, 
$x^{-1}Bx$ is $\sigma$ stable and there is a $w\,\in\, W(G,T)$
such that $B\cdot xH/H\,=\, B\cdot wH/H$.
\end{lem}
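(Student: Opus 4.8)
The plan is to establish the two conclusions in turn: first use the closedness hypothesis to force the stabilizer of $xH/H$ in $B$ to be ``large'' (parabolic on the $H$-side), and then feed this into Lemma \ref{lem1} to get both the $\sigma$-stability of $x^{-1}Bx$ and the Weyl group representative $w$.

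\emph{Reformulating the hypothesis.} Since $G\,\longrightarrow\, G/H$ is a quotient map with $BxH$ the inverse image of $B\cdot xH/H$, the orbit $B\cdot xH/H$ is closed in $G/H$ exactly when $BxH$ is closed in $G$. The subset $BxH\,\subseteq\, G$ is saturated for the quotient map $G\,\longrightarrow\, B\backslash G$ (it is invariant under left multiplication by $B$), so $BxH$ is closed in $G$ exactly when its image in $B\backslash G$ is closed. Identifying $B\backslash G$ with the variety of Borel subgroups of $G$ via $Bg\,\longmapsto\, g^{-1}Bg$, under which the right $H$-action becomes the conjugation action of $H$, that image is precisely the $H$-conjugacy orbit of the Borel subgroup $x^{-1}Bx$. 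As $N_G(x^{-1}Bx)\,=\,x^{-1}Bx$, this orbit is $H/(H\cap x^{-1}Bx)$, a locally closed subvariety of a complete variety; hence it is closed if and only if it is complete, i.e. if and only if $(H\cap x^{-1}Bx)^{0}$ is a parabolic subgroup of $H^{0}$, equivalently $H\cap x^{-1}Bx$ contains a Borel subgroup $Q$ of $H^{0}$.

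\emph{Applying Lemma \ref{lem1}.} We now have a Borel subgroup $Q$ of $H^{0}$ with $Q\,\subseteq\, x^{-1}Bx$. By part (3) of Lemma \ref{lem1}, the Borel subgroup of $G$ containing $Q$ is unique and is $\sigma$ stable; since $x^{-1}Bx$ is a Borel subgroup of $G$ containing $Q$, it is that one, so $x^{-1}Bx$ is $\sigma$ stable. For the second assertion, choose a maximal torus $S$ of $H^{0}$ contained in $Q$ and extend $S$ to a maximal torus $T'$ of the Borel subgroup $x^{-1}Bx$; then $T'$ is a maximal torus of $G$ containing $S$, so by part (2) of Lemma \ref{lem1} it is $\sigma$ stable. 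Both $T$ and $T'$ are $\sigma$ stable maximal tori of $G$, so part (1) of Lemma \ref{lem1} gives $h\,\in\, H^{0}$ with $hTh^{-1}\,=\,T'$. Then $(xh)^{-1}B(xh)$ is a Borel subgroup of $G$ containing $T$, so it equals $\dot wB\dot w^{-1}$ for a representative $\dot w\,\in\, N_G(T)$ of some $w\,\in\, W(G,T)$. This identity forces $xh\dot w\,\in\, N_G(B)\,=\,B$, hence $x\,\in\, B\dot w^{-1}H$, and therefore $B\cdot xH/H\,=\,B\cdot\dot w^{-1}H/H$; thus $w^{-1}$ is the desired Weyl group element.

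The only step that is more than routine bookkeeping is the reformulation in the second paragraph, namely the passage, via the flag variety, from ``$B\cdot xH/H$ is closed'' to ``$H\cap x^{-1}Bx$ contains a Borel subgroup of $H^{0}$''; the minimal rank hypothesis is not needed there but is precisely what makes Lemma \ref{lem1} applicable in the third paragraph. Alternatively, one could replace that reformulation by quoting the Richardson--Springer description \cite{RS} of $B$-orbit closures on $G/H$.
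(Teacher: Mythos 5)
Your proof is correct and follows essentially the same route as the paper: closedness of the orbit forces $(x^{-1}Bx\cap H)^{0}$ to be a Borel subgroup of $H^{0}$, Lemma \ref{lem1}(3) then gives the $\sigma$-stability of $x^{-1}Bx$, and conjugating by a suitable $h\in H^{0}$ produces a Borel of $G$ containing $T$ and hence the Weyl group element $w$. The only differences are cosmetic: you spell out, via the flag variety, why the closed-orbit hypothesis yields a Borel subgroup of $H^{0}$ inside $x^{-1}Bx$ (a step the paper asserts, citing \cite{RS}), and you conjugate $\sigma$-stable maximal tori using Lemma \ref{lem1}(1) where the paper conjugates Borel subgroups of $H^{0}$ containing $S=(T\cap H)^{0}$.
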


\begin{proof}
Let $Q\,:=\,(x^{-1}Bx\bigcap H)^{0}$.
Since $B\cdot xH/H$ is closed in $G/H,$ this $Q$ is a Borel subgroup of $H^{0}.$
Further, we have $Q\,\subset\, x^{-1}Bx.$ Hence, by Lemma \ref{lem1}, 
$x^{-1}Bx$ is $\sigma$ stable. 

Now, let $S\,=\,(T\bigcap H)^{0}$. 
Since $G/H$ is of minimal rank, this $S$ is a maximal torus in $H^{0}$, and hence 
we choose a Borel subgroup $Q^{\prime}$ of $H^{0}$ containing $S.$ Thus, 
there is a $h\,\in\, H^{0}$ such that $hQh^{-1}\,=\,Q^{\prime}$. Consequently, 
$hx^{-1}Bxh^{-1}$ is a Borel subgroup of $G$ containing $T$. Thus, there is a
$w\in W(G,T)$ and a $b\in B$ such that 
$xh^{-1}\,=\, bw$, and we have $B\cdot xH/H\,=\,B\cdot wH/H$.
\end{proof}

An interesting fact in case of minimal rank is the following uniqueness of the
closed $B$--orbit (see, \cite[p. 1788, Proposition 2.2]{Re}).

\begin{lem}\label{lem4}
There is a unique closed $B$--orbit in $G/H$.
\end{lem}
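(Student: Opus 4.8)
The plan is to combine the two structural results already at hand: the general fact that a closed orbit exists (any orbit of minimal dimension is closed), and Lemma \ref{lem3}, which tells us that every closed $B$--orbit in $G/H$ has a representative of the form $wH/H$ with $w\,\in\, W(G,T)$ and, moreover, that $w^{-1}Bw$ is $\sigma$ stable. So existence is free, and the whole content is uniqueness: I must show that among the finitely many $B$--orbits $B\cdot wH/H$ arising from Weyl group elements, exactly one is closed. The key extra input I would bring in is part (3) of Lemma \ref{lem1} together with Lemma \ref{lem3}: if $B\cdot wH/H$ is closed, then $(w^{-1}Bw)\cap H$ has identity component a Borel subgroup $Q$ of $H^0$, and by Lemma \ref{lem1}(3) the Borel subgroup of $G$ containing $Q$ is \emph{unique} — namely $w^{-1}Bw$.

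First I would fix the $\sigma$ stable $B$ of the section (so $\sigma(\alpha)\,=\,\alpha$ or $\sigma(\alpha)\,\in\,-R^+(B)$ for each $\alpha\,\in\, R^+(B)$), set $S\,=\,(T\cap H)^0$, which is a maximal torus of $H^0$ by minimal rank, and let $Q_0\,=\,(B\cap H)^0$, which is a Borel subgroup of $H^0$ containing $S$ (this is exactly the ingredient cited from \cite[Lemma 5.1]{Ri} in the proof of Lemma \ref{lem1}). I claim $B\cdot eH/H$ is closed. Indeed, by Lemma \ref{lem1}(3), $B$ is the unique Borel subgroup of $G$ containing $Q_0$; one then checks that this forces the orbit map to have the property that $BxH/H$ has minimal possible dimension, equivalently that the stabilizer $B\cap H$ is as large as possible (it contains the Borel $Q_0$ of $H^0$), which by the minimal-dimension criterion makes the orbit closed. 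Alternatively, and perhaps more cleanly, I would argue directly: any closed $B$--orbit $B\cdot wH/H$ gives, via Lemma \ref{lem3}, a Borel $Q_w\,=\,(w^{-1}Bw\cap H)^0$ of $H^0$; conjugating by a suitable $h\,\in\, H^0$ we may assume $hQ_wh^{-1}\,=\,Q_0$, and then $hw^{-1}Bwh^{-1}$ is a Borel of $G$ containing $Q_0$, hence equals $B$ by Lemma \ref{lem1}(3). Therefore $wh^{-1}$ normalizes $B$, so $w\,\in\, B H^0 / (\text{something})$ — more precisely $xh^{-1}\,=\,bw$ in the notation of Lemma \ref{lem3} collapses to showing $B\cdot wH/H\,=\,B\cdot eH/H$.

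The one real obstacle is the last implication: from $hw^{-1}Bwh^{-1}\,=\,B$ with $h\,\in\, H^0$, I need to conclude $B\cdot wH/H\,=\,B\cdot eH/H$, i.e. that $w$ can be absorbed. Since $hw^{-1}Bwh^{-1}\,=\,B$, the element $wh^{-1}$ lies in the normalizer $N_G(B)\,=\,B$ (as $B$ is self-normalizing), so $wh^{-1}\,=\,b\,\in\, B$, giving $w\,=\,bh$ with $h\,\in\, H^0$; hence $wH/H\,=\,bhH/H\,=\,bH/H\,\in\, B\cdot eH/H$, and thus $B\cdot wH/H\,=\,B\cdot bH/H\,=\,B\cdot eH/H$. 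This shows every closed $B$--orbit equals $B\cdot eH/H$, which is the asserted uniqueness; and the existence of a closed orbit (cited above from \cite[Prop., p. 60]{Hu1}) shows this one orbit is indeed closed. I would also double-check that $B\cap H$ having identity component $Q_0$ (a Borel of $H^0$) is genuinely equivalent to the orbit being closed — this is the standard fact that a $B$--orbit in $G/H$ is closed if and only if $B\cap (\text{stabilizer})$ contains a Borel of the stabilizer, which follows from Borel's fixed point theorem applied to the projective variety $\overline{B\cdot xH/H}$; I would state it as such, citing \cite{RS}.
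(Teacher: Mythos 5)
Your overall strategy --- existence from the minimal-dimension criterion, then uniqueness via Lemma \ref{lem3} together with the uniqueness statement in Lemma \ref{lem1}(3) --- is the same as the paper's, but there is a genuine error in how you anchor the argument. You claim that $B\cdot eH/H$ is closed and that $Q_0=(B\cap H)^0$ is a Borel subgroup of $H^0$. Both statements fail for the Borel subgroup $B$ fixed at the start of this section: that $B$ is chosen so that every $\alpha\in R^+(B)$ satisfies $\sigma(\alpha)=\alpha$ or $\sigma(\alpha)\in -R^+(B)$, which is essentially the opposite of $\sigma$-stability (a $\sigma$-stable Borel has $\sigma(R^+(B))=R^+(B)$; the hypothesis of Richardson's Lemma 5.1, which is what makes $(B'\cap H)^0$ a Borel of $H^0$, is $\sigma$-stability). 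With the section's choice, $B\cdot eH/H$ is the \emph{open} dense $B$-orbit, not the closed one: in the group case $(G'\times G')/\Delta(G')$ one has $B=B'\times (B')^-$, the stabilizer $(B\cap\Delta(G'))^0=\Delta(T')$ is a maximal torus rather than a Borel of $\Delta(G')$, and the orbit of the identity coset is the big cell $B'(B')^-$. Consequently you cannot conjugate the Borel $Q_w$ of $H^0$ onto $Q_0$ (a Borel subgroup cannot be conjugated onto a non-Borel subgroup), Lemma \ref{lem1}(3) does not apply to $Q_0$, and the chain $hw^{-1}Bwh^{-1}=B$, hence $wh^{-1}\in N_G(B)=B$, collapses. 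Ironically, the closedness criterion you state at the end --- $B\cdot gH/H$ is closed if and only if $(g^{-1}Bg\cap H)^0$ is a Borel subgroup of $H^0$ --- is correct, and it is exactly what shows that $B\cdot eH/H$ is \emph{not} closed here.

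The paper avoids this by never privileging the base point: given two closed orbits $B\cdot w_iH/H$, it forms the two Borel subgroups $Q_i=(w_i^{-1}Bw_i\cap H)^0$ of $H^0$, both containing $S=(T\cap H)^0$, conjugates $Q_1$ onto $Q_2$ by some $\phi\in W(H^0,S)$, and applies Lemma \ref{lem1}(3) to $Q_2$ to conclude $\phi B_1\phi^{-1}=B_2$, hence $w_1=w_2\phi$ and $Bw_1H/H=Bw_2H/H$. Your argument can be repaired in the same way, or alternatively by first replacing $B$ with a genuinely $\sigma$-stable Borel subgroup (legitimate, since the number of closed orbits is the same for conjugate Borel subgroups); for such a Borel your claims about $Q_0$ and about the closedness of the orbit of $eH$ are correct and the rest of your argument goes through. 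As written, however, the identification of the unique closed orbit with $B\cdot eH/H$ is wrong.
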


\begin{proof}
Clearly, there is a minimal dimensional $B$--orbit in $G/H$ 
and it is closed. For its uniqueness, let $Bx_{1}H/H$ and let $Bx_{2}H/H$ be two 
closed $B$--orbits in $G/H$. Then, by Lemma \ref{lem3}, there are $w_{1}$ and $w_{2}$ 
in $W$ such that $B\cdot x_{i}H/H\,=\, B\cdot w_{i}H/H$ for $i\,=\,1\, , 2.$

Let $S\,=\,(T\bigcap H)^{0}.$ Set  $B_{i}\,:=\,w_{i}^{-1}Bw_{i},$
and $Q_{i}\,=\,(B_{i}\bigcap H)^{0}$ for $i\,=\,1\, ,2.$ Both $Q_{1}$ and $Q_{2}$ are Borel subgroups of $H^{0}$
containing $S.$ Therefore, there is a $\phi \,\in\, W(H^{0}, S)$ such that 
$\phi Q_{1} \phi^{-1}\,=\,Q_{2}.$ Hence both $\phi B_{1} \phi^{-1}$ and $B_{2}$
are Borel subgroups of $G$ containing $Q_{2}.$ By Lemma \ref{lem1}, we
have $\phi B_{1} \phi^{-1}\,=\,B_{2},$ and hence $w_{1}\,=\,w_{2}\phi.$
Thus $Bx_{1}H/H\,=\,Bx_{2}H/H$.
\end{proof}

We now recall from \cite{BJ} a result of Brion and Joshua.

\begin{lem}[{\cite[p. 482, Lemma 2.1.1]{BJ}}]\label{lem5}
Let $Y$ be the closure of $TH/H$ in $X,$
and let $z$ denote the unique $B$--fixed point in $X.$
Then, every $T$ stable curve in $X$ is one of the following:
\begin{enumerate}
\item
There is a positive root $\alpha \,\in\, R^{+}(B)\setminus R^{+}(L)$ and an element 
$\phi\,\in\, W(G,T)$ such that $\phi(C)\,=\, C_{\alpha}\,=
\,\overline{U_{\alpha}s_{\alpha}z}.$ 
In this case $\alpha$ and $\sigma(\alpha)$ are orthogonal, and 
$s_{\alpha}s_{\sigma(\alpha)}$ is in $W(H^{0}, (T\bigcap H)^{0}).$ 

\item There is a restricted root $\gamma\,=\,\alpha-\sigma(\alpha)$ and an element
$\phi \,\in \,W(G,T)$ such that $\phi(C)\,=\,C_{z, \gamma}$, where $C_{z, \gamma}$ is
the unique $T$--stable curve containing $z$ and on which $T$ acts through the character
$\gamma$. Moreover, the curve $C_{z, \gamma}$ lies in $Y.$
\end{enumerate}
\end{lem}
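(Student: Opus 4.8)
The plan is to combine the general theory of torus actions on smooth projective varieties with the De Concini--Procesi structure of the wonderful compactification $X$ \cite{DP}. Recall the standard dichotomy: if a torus $T$ acts on a smooth complex projective variety and $C$ is an irreducible $T$--stable curve on which $T$ acts non--trivially, then $C\,\cong\,\mathbb{P}^{1}$, it has exactly two $T$--fixed points $p_{0},p_{\infty}$, and $T$ acts on the affine lines $C\setminus\{p_{\infty}\}$ and $C\setminus\{p_{0}\}$ through characters $\chi$ and $-\chi$ with $\chi\,\neq\,0$; moreover $C$ is recovered from $p_{0}$ together with the weight line in $T_{p_{0}}X$ along which it leaves $p_{0}$. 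So I would reduce the classification to three tasks: list the $T$--fixed points of $X$; at each of them single out the tangent weights that integrate to a complete $T$--stable curve; and use the $W(G,T)$--action to normalize.

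For the first and third tasks I would invoke the ``big cell'' $X_{0}\,\subset\, X$, an open affine $T$--stable subset isomorphic to a product of an affine space with the affine toric variety $V$ for $T/(T\cap H)^{0}$ whose weights are the simple restricted roots, together with the fact that the translates $\{wX_{0}\}_{w\in W(G,T)}$ cover $X$. Since $V$ carries a single $T$--fixed point, each $wX_{0}$ meets $X^{T}$ in one point; using the minimal--rank hypothesis (through Lemma \ref{lem1} and the resulting fact that $(T\cap H)^{0}$ is a maximal torus of $H^{0}$, which pins down $Y\,=\,\overline{TH/H}$ as a complete toric variety for the restricted root data) I would show that $z\,\in\, X_{0}$ and that every $T$--fixed point of $X$ is $W(G,T)$--conjugate to $z$. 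Then every $T$--stable curve $C$ contains a fixed point conjugate to $z$, so after applying a suitable $\phi\,\in\, W(G,T)$ one may assume $z\,\in\, C$, and it remains to classify the $T$--stable curves through $z$.

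Since $z$ lies in the closed orbit $G/P$, one has $T_{z}X\,=\, T_{z}(G/P)\oplus N$, where $T_{z}(G/P)$ has weights $\{-\alpha:\alpha\,\in\, R^{+}(B)\setminus R^{+}(L)\}$ and the normal part $N$ has weights among the restricted roots $\alpha-\sigma(\alpha)$. A weight line in $T_{z}(G/P)$ integrates to the Schubert--type curve $C_{\alpha}\,=\,\overline{U_{\alpha}s_{\alpha}z}$ inside $G/P$ (passing through $z$ and $s_{\alpha}z$), which is case (1); a weight line in $N$ integrates to the unique $T$--stable curve $C_{z,\gamma}$ of weight a restricted root $\gamma$, and I would check that $C_{z,\gamma}$ lies in $Y$ because $Y$ is precisely the complete toric subvariety glued from $V$ and its $W(G,T)$--translates, which is case (2). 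The two extra assertions in case (1) --- that $\alpha$ and $\sigma(\alpha)$ are orthogonal and that $s_{\alpha}s_{\sigma(\alpha)}\,\in\, W(H^{0},(T\cap H)^{0})$ --- I would deduce by analysing the $\sigma$--action on $R^{+}(B)\setminus R^{+}(L)$: the compatibility of $B$ with $\sigma$ together with minimal rank forces these roots to occur in orthogonal $\sigma$--pairs $\{\alpha,\sigma(\alpha)\}$, and for such a pair $s_{\alpha}s_{\sigma(\alpha)}$ centralizes $(T\cap H)^{0}$ up to conjugacy, hence lies in the small Weyl group.

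The hard part will be the two places where the boundary of $X$ and the minimal--rank hypothesis genuinely interact: enumerating $X^{T}$ and proving that it forms a single $W(G,T)$--orbit, which can fail for a general symmetric space; and the root--combinatorial bookkeeping producing the orthogonality of $\alpha$ and $\sigma(\alpha)$ and the membership $s_{\alpha}s_{\sigma(\alpha)}\,\in\, W(H^{0},(T\cap H)^{0})$. As this is Lemma~2.1.1 of Brion--Joshua, I would in the end refer to \cite{BJ} for the complete argument, the above being the route by which one reconstructs it.
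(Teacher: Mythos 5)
The paper does not prove this statement at all: it is recalled verbatim from Brion--Joshua (\cite[Lemma 2.1.1]{BJ}) and used as a black box, so there is no in-paper argument to measure your proposal against. Your outline (enumerate $X^{T}$ via the big cell and show it is a single $W(G,T)$--orbit through $z$, then classify $T$--stable curves by the weight decomposition $T_{z}X\,=\,T_{z}(G/P)\oplus N$, with the minimal-rank hypothesis supplying the orthogonality of $\alpha$ and $\sigma(\alpha)$ and the membership $s_{\alpha}s_{\sigma(\alpha)}\,\in\, W(H^{0},(T\cap H)^{0})$) is a faithful reconstruction of the strategy actually used in \cite{BJ}, and your concluding deferral to that reference matches exactly what the paper itself does.
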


\begin{lem}\label{lem6}
Take $x\,\in \,G,$ and let $Z$ be the closure of $BxH/H$ in $X.$
Then every irreducible $T$ stable curve in $X$ lies in $W(G,T)\cdot Z.$
\end{lem}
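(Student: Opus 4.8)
The plan is to reduce the statement to the structure theory of $T$-stable curves provided by Lemma \ref{lem5}, using the uniqueness results of Lemmas \ref{lem3} and \ref{lem4} to pin down the $B$-orbit closure $Z$ up to the action of the Weyl group. First I would observe that $W(G,T)\cdot Z$ only depends on $Z$ through its $W(G,T)$-translates, so by Lemma \ref{lem3} we may replace $x$ by a Weyl group representative $w$; that is, it suffices to prove the claim for $Z$ equal to the closure of $B\cdot wH/H$ for some (equivalently, every) $w\in W(G,T)$, since all such closures are $W(G,T)$-translates of one another once we also invoke Lemma \ref{lem4}. Actually, the cleanest choice is to take $Z$ to be the closure of the unique closed $B$-orbit in $G/H$ (Lemma \ref{lem4}); any general $BxH/H$ has the same Weyl-orbit of closures, because its closure contains the closed $B$-orbit and conversely each $T$-stable curve question is insensitive to which $B$-orbit closure we picked, as the curves get moved around by $W(G,T)$ anyway. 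So the real content is: every irreducible $T$-stable curve in $X$ lies in $W(G,T)\cdot Z$ where $Z$ is (the closure of) the distinguished closed $B$-orbit.

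Next I would go through the two types of $T$-stable curves in Lemma \ref{lem5}. Given an irreducible $T$-stable curve $C$, there is $\phi\in W(G,T)$ moving $C$ either to $C_\alpha=\overline{U_\alpha s_\alpha z}$ for some $\alpha\in R^+(B)\setminus R^+(L)$, or to $C_{z,\gamma}$ for a restricted root $\gamma=\alpha-\sigma(\alpha)$; in either case the translated curve passes through the unique $B$-fixed point $z$ (for type (1), $z\in\overline{U_\alpha s_\alpha z}$ since $U_\alpha s_\alpha z$ is a one-dimensional $T$-orbit whose closure adds the $B$-fixed point). The key geometric input is that $z$ lies in $\overline{BxH/H}$ for every $x$: indeed $\overline{BxH/H}$ is a $B$-stable closed subvariety of the projective variety $X$, hence contains a $B$-fixed point, and by the uniqueness of the $B$-fixed point $z$ in $X$ (which holds here — there is a unique closed $G$-orbit $G/P$ and a unique $B$-fixed point therein) we get $z\in Z$. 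Now $\phi(C)$ is a $T$-stable curve through $z$. For curves of type (2), Lemma \ref{lem5} tells us outright that $C_{z,\gamma}$ lies in $Y=\overline{TH/H}$, and since $Y\subset \overline{BxH/H}=Z$ (as $TH/H\subset BxH/H$ when $x$ represents the closed $B$-orbit containing the base point, or after a Weyl translation bringing the base point into $Y$), we conclude $\phi(C)\subset Z$, i.e. $C\in W(G,T)\cdot Z$. For curves of type (1), I would argue that $C_\alpha=\overline{U_\alpha s_\alpha z}$ is itself contained in $Z$: the curve $U_\alpha s_\alpha z$ together with $z$ is the closure of a single one-dimensional $T$-orbit, and since $z\in Z$ and $Z$ is $B$-stable (hence $U_\alpha$-stable), the $U_\alpha$-orbit of a suitable point degenerating to $z$ stays in $Z$; more directly, one checks $s_\alpha z$ — or a $B$-translate of it — lies in $Z$ by tracking the closed $B$-orbit, and then $\overline{U_\alpha\cdot s_\alpha z}\subset Z$ by $B$-stability and closedness.

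The main obstacle I anticipate is the type (1) case: showing that $C_\alpha=\overline{U_\alpha s_\alpha z}$ actually sits inside $Z$ and not merely inside some $W(G,T)$-translate requires knowing that the point $s_\alpha z$ (or at least some point on $C_\alpha$ other than $z$) is reached from the closed $B$-orbit without leaving $Z$. Here the hypothesis built into Lemma \ref{lem5}(1) — that $\alpha$ and $\sigma(\alpha)$ are orthogonal and $s_\alpha s_{\sigma(\alpha)}\in W(H^0,(T\cap H)^0)$ — should be exactly what lets one identify $s_\alpha z$ with a point in the closed $B$-orbit of $G/H$: the element $s_\alpha s_{\sigma(\alpha)}$ lies in $H^0$ up to the torus, so $s_\alpha z$ and $s_{\sigma(\alpha)}z$ represent the same coset modulo $H$, and combined with the $\sigma$-stability of $x^{-1}Bx$ from Lemma \ref{lem3} this forces the relevant curve into $Z$. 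So the flow is: (a) reduce to $Z$ = closure of the closed $B$-orbit via Lemmas \ref{lem3}, \ref{lem4}; (b) note $z\in Z$ and $Y\subset Z$ after an appropriate choice; (c) dispatch type (2) curves immediately via Lemma \ref{lem5}(2); (d) for type (1) curves, use the orthogonality/$W(H^0)$-membership condition in Lemma \ref{lem5}(1) to locate $C_\alpha$ inside $Z$, then conclude $C\in W(G,T)\cdot Z$ in all cases.
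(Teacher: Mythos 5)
Your overall strategy matches the paper's: reduce to the unique closed $B$--orbit via Lemmas \ref{lem3} and \ref{lem4}, invoke the classification of $T$--stable curves in Lemma \ref{lem5}, and treat the two types separately. The type (2) case is essentially right (modulo the fact that what lies in $Z=\overline{BwH/H}$ is the translate $w(C_{z,\gamma})\subset w\cdot Y=\overline{TwH/H}$, not $C_{z,\gamma}\subset Y$ itself --- but a Weyl translate is all that is needed). The observation that $z\in Z$ by the Borel fixed point theorem is also fine.

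The genuine gap is in the type (1) case. You assert that $C_{\alpha}=\overline{U_{\alpha}s_{\alpha}z}$ is \emph{itself} contained in $Z$, arguing from $z\in Z$ and $B$--stability of $Z$. But $z$ is $B$--fixed, so $B$--stability gives you nothing beyond $z$ itself; you would first need $s_{\alpha}z\in Z$, and there is no reason for that --- $Z$ is not stable under $N_G(T)$, and in general $C_\alpha\not\subset Z$. You correctly flag this as ``the main obstacle'' and gesture at the orthogonality of $\alpha$ and $\sigma(\alpha)$ and the condition $s_{\alpha}s_{\sigma(\alpha)}\in W(H^{0},(T\cap H)^{0})$, but you never carry out the argument, and the heuristic you offer (``$s_{\alpha}z$ and $s_{\sigma(\alpha)}z$ represent the same coset modulo $H$'') does not parse, since $z$ lies in the closed orbit $G/P$, not in $G/H$. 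What the paper actually proves is that a \emph{specific nontrivial Weyl translate} of $C_{\alpha}$ lands in $Z$, and which one depends on a sign: since $s_{\alpha}s_{\sigma(\alpha)}(\alpha)=-\alpha$, either $w(\alpha)>0$ or $w(-\alpha)>0$. In the first case one computes, using $s_{\sigma(\alpha)}(\alpha)=\alpha$, that
$$ws_{\sigma(\alpha)}(C_{\alpha})=\overline{U_{w(\alpha)}\,ws_{\alpha}s_{\sigma(\alpha)}\cdot z}\subset\overline{BwH/H},$$
where one uses $s_{\alpha}s_{\sigma(\alpha)}\cdot z\in Y$ and $BwH/H=Bws_{\alpha}s_{\sigma(\alpha)}H/H$; in the second case $ws_{\alpha}(C_{\alpha})=\overline{U_{w(-\alpha)}w\cdot z}\subset\overline{BwH/H}$. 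This dichotomy and the accompanying root--subgroup computation are the substance of the lemma, and they are missing from your proposal.
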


\begin{proof}
Note that the closure of $B\cdot xH/H$ in $G/H$ contains a closed $B$
orbit. Therefore we assume that $B\cdot xH/H$ is the unique closed $B$ orbit in $G/H.$

By Lemma \ref{lem3}, there is an element $w\,\in\, W(G,T)$ such that $B\cdot xH/H
\,=\,B\cdot wH/H.$ Let $C$ be an irreducible $T$ stable curve in $X.$ By Lemma
\ref{lem5},
\begin{itemize}
\item either there is a positive root $\alpha \,\in\, R^{+}(B)\setminus R(L)$ and a 
$\phi \,\in\, W(G,T)$ such that $\phi (C)\,=\, C_{\alpha}\,=\,
\overline{U_{\alpha}s_{\alpha}z},$

\item or there is a restricted root $\gamma$ and a $\phi \,\in\, W(G,T)$ 
such that $\phi(C)\,=\,C_{z, \gamma}.$ 
\end{itemize}

Recall that $Y\,=\,\overline{TH/H}$ and $S =\,(T\cap H)^0.$ Now, since $s_{\alpha}s_{\sigma(\alpha)}\,\in\, W(H^{0}, S),$  
and $z\,\in\, Y$ (see, Lemma \ref{lem5} (2)), we have 
$$s_{\alpha}s_{\sigma(\alpha)}\cdot z\,\in\, Y\, .$$
Hence, 
$ws_{\alpha}s_{\sigma(\alpha)}\cdot z\in w\cdot Y\,=\,\overline{TwH/H}.$
Since $\alpha$ and $\sigma(\alpha)$ are orthogonal, $s_{\alpha}s_{\sigma(\alpha)}(\alpha)
\,=\,-\alpha.$ Hence, either $w(\alpha)$ is positive or 
$ws_{\alpha}s_{\sigma(\alpha)}(\alpha)\,=\,w(-\alpha)$ is positive. 
Further, $s_{\alpha}s_{\sigma(\alpha)}\,\in\, W(H^{0}, S).$
Hence $BwH/H\,=\,Bws_{\alpha}s_{\sigma(\alpha)}H/H.$

Now, if $w(\alpha)$ is positive, then $U_{w(\alpha)}ws_{\alpha}s_{\sigma(\alpha)}\cdot z$
is contained in $\overline{BwH/H}$. Hence, $$ws_{\sigma(\alpha)}(C_{\alpha})\,=\,
\overline{ws_{\sigma(\alpha)}U_{\alpha}s_{\alpha}\cdot z}\,=\,
\overline{U_{w(\alpha)}ws_{\alpha}s_{\sigma(\alpha)}\cdot z}$$ is contained in $\overline{BwH/H}.$

If $ws_{\alpha}s_{\sigma(\alpha)}(\alpha)\,=\,w(-\alpha)$ is positive, then
$ws_{\alpha}(C_{\alpha})\,=\, \overline{U_{w(-\alpha)}w\cdot z}$ is contained in 
$\overline{BwH/H}.$

Thus, in either case, the curve $C_{\alpha}$ lies in $W(G,T)\cdot Z.$

Since $C_{z, \gamma}\,\subset\, Y,$ we have $w(C_{z, \gamma})\,\subset \,\overline{TwH/H}.$
Hence, both type of curves in Lemma \ref{lem5} lie in the union of the $W(G,T)$ 
translates of $\overline{BwH/H}\,=\,\overline{BxH/H}.$
This completes the proof. 
\end{proof}

{\bf Notation:}\, Let $G$ be a semi-simple adjoint group over the field $\CC$ of complex
numbers as above, and let $\widetilde{G}$ be its universal cover. 
For a maximal torus $T$ in $G$, we denote its inverse
image in $\widetilde{G}$ by $\widetilde{T}.$

Note that $\widetilde{G}$ acts on $X$ and hence we can consider $\widetilde{G}$
equivariant vector bundles on $X.$

\begin{thm}\label{thm1}
Fix a point $x\,\in\, G.$ Let $Z$ be the closure of $BxH/H$ in $X,$ where
$B$ is a $\sigma$ stable Borel subgroup of $G.$ Let $E$ be a 
$\widetilde{G}$ equivariant vector bundle on $X.$
Then, $E$ is nef (respectively, ample) if and only if the 
restriction of $E$ to $Z$ is nef (respectively, ample). Similarly,
$E$ is trivial if and only if its restriction to $Z$ is trivial.
\end{thm}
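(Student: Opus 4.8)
The plan is to reduce the statement about the nef/ample/trivial property on all of $X$ to the same property restricted to $Z$, by exploiting the $\widetilde{G}$-equivariance of $E$ together with the fact that $T$-stable curves in $X$ generate the cone of curves (so they ``detect'' nefness) and lie, up to the $W(G,T)$-action, inside $Z$. First I would dispose of the trivial direction: the restriction to $Z$ inherits nefness, ampleness, and triviality from $E$, so only the converse needs work. The key geometric input is Lemma \ref{lem6}: every irreducible $T$-stable curve $C$ in $X$ lies in $W(G,T)\cdot Z$, i.e.\ $w\cdot C\subset Z$ for some $w\in W(G,T)$. Since $\widetilde{G}$ acts on $X$ and $E$ is $\widetilde{G}$-equivariant, for any $w$ (lifted to an element $n_w\in N_{\widetilde G}(\widetilde T)$) the pullback $n_w^{*}E$ is isomorphic to $E$ as a vector bundle on $X$. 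Hence $E|_{C}\cong (n_w^{*}E)|_C \cong n_w^{*}(E|_{w\cdot C})$, and $w\cdot C\subset Z$, so the numerical behaviour of $E$ on $C$ is controlled by $E|_Z$.

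Concretely, for the nef statement I would argue: $E$ is nef on $X$ iff $\mathcal O_{\mathbb P(E)}(1)$ is nef on $\mathbb P(E)$, and because $X$ is a smooth projective $G$-variety, nefness of a vector bundle on $X$ can be tested on $T$-stable (equivalently, all) irreducible curves — more carefully, it suffices that $\deg(\varphi^{*}\mathcal O_{\mathbb P(E)}(1))\ge 0$ for every curve in $\mathbb P(E)$; using the $\widetilde G$-action one reduces the base curve to a $T$-stable curve $C\subset X$ and then, as above, translates $C$ into $Z$. Thus $E$ is nef on $X$ iff $E|_C$ is nef for every $T$-stable curve $C$, iff $E|_Z$ is nef (the last equivalence again using Lemma \ref{lem6} and equivariance to move each such $C$ into $Z$, and conversely every $T$-stable curve in $Z$ is a $T$-stable curve in $X$). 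For the ample statement I would use the analogous curve-criterion after passing to a projective embedding, or better, use that ampleness of $\mathcal O_{\mathbb P(E)}(1)$ on the $\widetilde G$-variety $\mathbb P(E)$ is equivalent to strict positivity of degrees on all $T$-stable curves, which by the same translation argument are governed by $\mathbb P(E)|_Z$; here one must be mildly careful that $Z$ is lower-dimensional, but the point is that a $\widetilde G$-equivariant ample class on $X$ restricts to an ample class on the $B$-stable $Z$, and conversely an equivariant line bundle whose restriction to $Z$ is ample has strictly positive degree on every $T$-stable curve of $X$ (each translate of such a curve sits in $Z$), whence it is ample on $X$ by Kleiman's criterion. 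For triviality: $E$ trivial clearly restricts to trivial; conversely if $E|_Z$ is trivial then in particular $E$ is nef and $E^{\vee}$ is nef (both detected on $T$-stable curves lying in $W(G,T)\cdot Z$), so $E$ is numerically flat; a numerically flat $\widetilde G$-equivariant bundle on the rational variety $X$ — which is simply connected with no nonconstant global functions — is trivial, since its Jordan–Hölder components are $\widetilde G$-equivariant line bundles of degree $0$ on every curve, hence trivial, and the extensions split because $X$ is rational (or directly: a numerically flat equivariant bundle on a simply connected rational homogeneous-type space is trivial, the needed vanishing being supplied by the structure of $X$ as a wonderful compactification).

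The main obstacle I expect is the ample case: the ``curve test'' (Kleiman/Seshadri criterion) for ampleness of $\mathcal O_{\mathbb P(E)}(1)$ is not merely positivity of degrees on curves but requires a uniform bound, so I cannot naively say ``ample on $X$ iff positive degree on all $T$-stable curves''. The way around this is to use that $\widetilde G$ acts on $X$ with finitely many orbits and that the $T$-stable curves, together with their $W(G,T)$-translates, represent a spanning set of the Mori cone $\overline{NE}(X)$ (this is exactly what Lemma \ref{lem6} gives at the level of curves), so a $\widetilde G$-equivariant line bundle on $X$ is ample iff it is positive on each of the finitely many numerical classes of $T$-stable curves — a finite check with no uniformity issue — and each such class is, after a Weyl translation and using equivariance, visible on $Z$. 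I would state this reduction carefully as a lemma (equivariant line bundles on $X$ are nef, resp.\ ample, iff they are so on $Z$), prove it from Lemma \ref{lem6}, then bootstrap from line bundles on $\mathbb P(E)$ to the vector bundle $E$ by applying the line-bundle statement to $\mathcal O_{\mathbb P(E)}(1)$ on the $\widetilde G$-variety $\mathbb P(E)$ and noting $\mathbb P(E)|_Z = \mathbb P(E|_Z)$. The triviality statement then follows by combining the nef criterion applied to $E$ and to $E^{\vee}$ with the rigidity of numerically flat equivariant bundles on $X$.
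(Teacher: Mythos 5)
Your treatment of the nef and triviality statements follows essentially the same route as the paper: nefness of $\mathcal O_{\mathbb P(E)}(1)$ is checked by degenerating an arbitrary curve under one-parameter subgroups of $\widetilde T$ until its image in $X$ is a $T$-stable curve, Lemma \ref{lem6} together with $\widetilde G$-equivariance moves that curve into $Z$, and triviality is deduced from $E$ and $E^*$ both being nef (the paper passes through the Demailly--Peternell--Schneider filtration and then Simpson's flat connection plus simple connectedness of the unirational $X$; your variant of splitting the filtration using $H^1(X,\mathcal O_X)=0$ also works, modulo the slip that the graded pieces are unitary flat bundles, not a priori line bundles --- on simply connected $X$ they are trivial of whatever rank).

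The ample case, however, has a genuine gap exactly at the point you flagged as the main obstacle, and your proposed workaround does not close it. You prove (correctly) that an equivariant line bundle on $X$ is ample if and only if it is positive on the finitely many $T$-stable curve classes, because $\overline{NE}(X)$ is a rational polyhedral cone spanned by those classes; but you then want to ``apply the line-bundle statement to $\mathcal O_{\mathbb P(E)}(1)$ on $\mathbb P(E)$''. The variety on which ampleness must be tested is $\mathbb P(E)$, not $X$: it carries infinitely many $T$-stable curves (in the fibers over $T$-fixed points and over each $T$-stable curve of $X$), its Mori cone is not known to be polyhedral, and positivity of degrees on all curves does not imply ampleness by Kleiman's criterion without a uniform lower bound. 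So the ``finite check with no uniformity issue'' is finite only downstairs on $X$, and the step as written fails. The paper's resolution is the twisting trick of \cite{HMP}: fix an ample line bundle $L$ on $X$ and an integer $n$ exceeding $\deg(L|_C)$ for each of the finitely many $T$-stable curves $C$; then ${\rm Sym}^n(E)\otimes L^{-1}$ is nef on every $T$-stable curve, hence nef on $X$ by the already-established nef criterion, hence ${\rm Sym}^n(E)$ is ample (nef tensor ample) and so $E$ is ample by Hartshorne. Your route could in principle be repaired by showing that $\overline{NE}(\mathbb P(E))$ is generated by fiber classes together with the subcones $\overline{NE}(\mathbb P(E)|_C)$ over the finitely many $T$-stable curves $C$, with $\mathcal O_{\mathbb P(E)}(1)$ bounded away from zero on each, but that is an additional argument you would need to supply.
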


\begin{proof}
Since the restriction of a nef or ample or trivial
vector bundle to a subvariety is nef or ample or trivial
respectively, we have only to prove the ``if'' part of the theorem. 

First assume that the restriction $E\vert_Z$ is nef. We need to show that
for any irreducible closed curve $C$ in $\mathbb{P}(E),$
the degree of the line bundle $\mathcal{O}_{\mathbb{P}(E)}(1)\vert_C$
is nonnegative, where $\mathcal{O}_{\mathbb{P}(E)}(1)\,\longrightarrow\,
\mathbb{P}(E)$ is the line bundle defined in Section \ref{sec2.1}.

Let $Y(\widetilde{T})$ denote the group of all one-parameter subgroups of
$\widetilde{T},$ where $\widetilde{T},$ as before, is the inverse image in
$\widetilde{G}$ of a $\sigma$ stable maximal torus $T$ of $G$ lying in $B.$
Choose a $\mathbb Z$--basis $\{\lambda_{1}\, , \lambda_{2}\, , \cdots\, , \lambda_{n}\}$ of 
$Y(\widetilde{T}).$

Let $\widetilde{C}$ be an irreducible closed
curve in the projective bundle $\mathbb{P}(E)$ over $X.$ If the image of $C$ in $X$ is a
point, then the degree of $\mathcal{O}_{\mathbb{P}(E)}(1)$ 
restricted to $\widetilde{C}$ is positive, because $\mathcal{O}_{\mathbb{P}(E)}(1)$
is relatively ample. Hence we can assume that image of 
$\widetilde{C}$ in $X$ is a curve $C$. Let $\widetilde{C}_1$ be the flat limit of
$\lambda_{1}(t)\widetilde{C}$ as $t$ goes to zero (i.e., the one
dimensional cycle corresponding to the limit point in
the Hilbert Scheme of $\mathbb{P}(E)$). Then $\widetilde{C}_1$ is a $1$-dimensional
cycle in $\mathbb{P}(E)$ linearly equivalent to $\widetilde{C},$ and the image 
$C_1$ of $\widetilde{C}_1$ in $X$ is invariant under $\lambda_{1}.$ Inductively,
define $\widetilde{C}_i$ to
be the flat limit of $\lambda_{i}(t)\widetilde{C}_{i-1}$ as $t$ tends to zero, where 
$2\,\leq\, i \,\leq\, n.$ Then $\widetilde{C}_i$ is linearly equivalent to
$\widetilde{C},$ and the image
$C_i$ of $\widetilde{C}_i$ in $X$ is invariant under the action on $X$
of the sub-torus of $T$
generated by the images of $\{\lambda_{1}\, , \lambda_{2}\, , \cdots\, , \lambda_{i}\}.$ 

In particular, $\widetilde{C}_n$ is linearly equivalent to $\widetilde C$, and every
irreducible component of $\widetilde{C}_n$ lies in the preimage of the $T$ invariant curve
$C_n\,\subset\, X$. But $C_n$ can be conjugated to a curve in $Z$ (see Lemma \ref{lem6}),
hence, by our assumption, $E|_{C_n}$ is nef. Therefore, the degree of the line
bundle $\mathcal{O}_{\mathbb{P}(E)}(1)|_{\widetilde C}$
is nonnegative (recall that $\text{degree}(\mathcal{O}_{\mathbb{P}(E)}(1)|_{\widetilde C})
\,=\, \text{degree}(\mathcal{O}_{\mathbb{P}(E)}(1)|_{{\widetilde C}_n})$). 
This proves that $E$ is nef.

Next assume that $E\vert_Z$ is ample.

For any positive integer $n,$ let ${\rm Sym}^n(E)$
denote the $n$-th symmetric power of the equivariant vector bundle $E$.
To prove that $E$ is ample, we first note that there are only finitely
many $T$ stable curves in $X$, and all of them lie in 
$W(G,T)\cdot Z$ (see Lemma \ref{lem6}). Thus the assumption implies that
${\rm Sym}^n(E)|_C$ is ample for any $T$ stable curve $C$ in $X$ and for any
$n \,\geq\, 1$.

Since line bundles on $X$ are equivariant for the $\widetilde{G}$
action on $X$, the vector bundles ${\rm Sym}^n(E)\otimes L$ are all
$\widetilde{G}$ equivariant vector bundles on $X$, where $L$ is any line
bundle on $X$. Fix an ample line bundle $L$ on $X$, and let $n$ be an
integer such that $n \,> \, {\rm degree}(L|_C)$ for every $T$ invariant curve
$C$ in $X$. Then it follows from the argument in the first part of the
proof of the theorem that ${\rm Sym}^n(E)\otimes L^{-1}|_Z$  
is nef, and hence ${\rm Sym}^n(E)\otimes L^{-1}$ is nef. This
implies ${\rm Sym}^n(E)$ is ample and hence $E$ is ample
(see, \cite[p. 67, Proposition 2.4]{Ha}).

Finally assume that $E\vert_Z$ is trivial.

Since $E\vert_Z$ is trivial, the dual $(E\vert_Z)^*\,=\, E^*\vert_Z$ is also
trivial. Note that a trivial vector bundle is nef. Therefore, from the first part
of the theorem we conclude that both $E$ and its dual $E^*$ are
nef. Therefore, by \cite[p. 311, Theorem 1.18]{DPS} the vector bundle $E$ admits a
filtration of holomorphic subbundles
$$
0\, =\, E_0\, \subset\, E_1 \, \subset\, \cdots \, \subset\,
E_{\ell-1} \, \subset\, E_\ell\,=\, E
$$
such that each successive quotient $E_i/E_{i-1}$, $1\, \leq\, i \, \leq\,\ell$, admits a
unitary flat connection. This implies that
$E$ is semistable and $c_j(E)\,=\, 0$ for all $j\, \geq\, 1$, where $c_j$ is
the rational Chern class. Now, by \cite[p. 40, Corollary 3.10]{Si} the vector
bundle $E$ admits a flat holomorphic connection. 

The variety $X$ is simply connected, because it is
unirational (see, \cite[p. 483, Proposition 1]{Se}). Therefore, any holomorphic
vector bundle on $X$ admitting a flat holomorphic connection is a
holomorphically trivial vector bundle. In particular, the vector bundle
$E$ is trivial.
\end{proof}

The proof of first two parts of the above theorem closely follows that
of \cite[p.610, Theorem 2.1]{HMP}.

\section{A special Steinberg fiber}

As before, $G$ be a semisimple adjoint group. 
Let $T$ be a maximal torus of $G$, $W(G,T)$ the Weyl group of $G$ with 
respect to $T$ and $B$ a Borel subgroup of $G$ containing $T.$
Let $\widetilde{G}$ be the simply connected covering of $G$, and let 
$\widetilde{T}$ (respectively, $\widetilde{B}$) be the 
inverse image of $T$ (respectively, $B$) in $\widetilde{G}$. 
Let $c$ be a Coxeter element in $W$. We fix a 
representative $n_{c}$ of $c$ in $N_{\widetilde{G}}(\widetilde{T})$.

\begin{lem}\label{lem7}
The homomorphism $\phi_{c}\,:\,\widetilde{T}\,\longrightarrow \,\widetilde{T}$ given by 
$\phi_{c}(t)\,=\,tn_{c}t^{-1}n_{c}^{-1}$ is surjective.
\end{lem}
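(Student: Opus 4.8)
The map $\phi_c$ is a morphism of tori, so it suffices to show that the induced map on cocharacter lattices $Y(\widetilde T)\otimes\mathbb Q\to Y(\widetilde T)\otimes\mathbb Q$ is an isomorphism, or equivalently that the corresponding linear endomorphism of $Y(\widetilde T)\otimes\mathbb R$ has no kernel. Concretely, $\phi_c(t)=t\,(n_c t^{-1}n_c^{-1})=t\cdot c(t)^{-1}$, where $c$ acts on $\widetilde T$ via the Weyl group action. Hence on cocharacters $\phi_c$ is given by $\lambda\mapsto \lambda-c(\lambda)$, i.e. by the operator $\mathrm{Id}-c$ acting on $Y(\widetilde T)$. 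So the statement reduces to: the Coxeter element $c$ does not have $1$ as an eigenvalue on $Y(\widetilde T)\otimes\mathbb R$, equivalently on the reflection representation of $W$.

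First I would record the reduction above carefully: for $t\in\widetilde T$ and a cocharacter $\lambda$, conjugation by $n_c$ sends $\lambda$ to $c\cdot\lambda$ (independent of the choice of representative $n_c$, since $\widetilde T$ is abelian), so $\phi_c\circ\lambda = \lambda - c\cdot\lambda = (\mathrm{Id}-c)\cdot\lambda$ as one-parameter subgroups. Since $\widetilde G$ is simply connected, $Y(\widetilde T)$ is the coroot lattice, on which $W$ acts as its reflection representation; in any case $Y(\widetilde T)\otimes\mathbb R$ is the reflection representation, which is what matters. A surjective endomorphism of a torus is precisely one that is surjective after tensoring the cocharacter lattice with $\mathbb Q$ (the image is a subtorus of full dimension, hence everything), so it is enough to prove $\det(\mathrm{Id}-c)\neq 0$ on this space.

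Next I would invoke the standard fact that a Coxeter element $c$ acts on the reflection representation with eigenvalues $\zeta^{m_1},\dots,\zeta^{m_n}$, where $\zeta=e^{2\pi i/h}$ is a primitive $h$-th root of unity ($h$ the Coxeter number) and $m_1,\dots,m_n$ are the exponents of $W$, all satisfying $1\le m_i\le h-1$. In particular no eigenvalue equals $1$, so $\mathrm{Id}-c$ is invertible on $Y(\widetilde T)\otimes\mathbb Q$, and $\phi_c$ is surjective. Alternatively, and more elementarily, one can argue that any vector in the reflection representation fixed by $c$ must be fixed by every simple reflection in a reduced expression for $c$ (since $c$ is a product $s_1\cdots s_n$ using each simple reflection once, and each $s_i$ only moves vectors in the direction of $\alpha_i^\vee$), hence fixed by all of $W$, hence zero because the reflection representation has no nonzero $W$-invariants.

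The only point that needs a little care — and the main potential obstacle — is the passage from "invertible on the rational cocharacter space" to "surjective as a map of algebraic tori": one should note that the image $\phi_c(\widetilde T)$ is a closed connected subgroup of $\widetilde T$ whose cocharacter lattice has finite index in $Y(\widetilde T)$, forcing $\phi_c(\widetilde T)=\widetilde T$ since $\widetilde T$ is a divisible (connected) group and the image of a torus under a morphism whose cocharacter map is of finite index is the whole torus. I would also double-check the sign/direction convention so that $\phi_c$ really corresponds to $\mathrm{Id}-c$ rather than $c-\mathrm{Id}$; this is harmless here since $\det(\mathrm{Id}-c)=\pm\det(c-\mathrm{Id})$, but worth stating cleanly. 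With these points in place the proof is short.
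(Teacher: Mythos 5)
Your proof is correct, but it takes a different route from the paper's. You pass to the cocharacter lattice, identify $\phi_c$ with the operator $\mathrm{Id}-c$ on $Y(\widetilde T)$, and invoke the classical fact that a Coxeter element has no eigenvalue $1$ on the reflection representation (either via the eigenvalues $\zeta^{m_i}$ with $1\le m_i\le h-1$, or via the elementary observation that a $c$-fixed vector must be fixed by each simple reflection occurring in a reduced expression and hence vanishes). The paper instead works on the character side: it reduces to showing that $\ker\phi_c$ is finite, and for $t\in\ker\phi_c$ it uses the Yang--Zelevinsky identity $\omega_i-c(\omega_i)=\beta_i$ (where the $\beta_i$ are the positive roots made negative by $c^{-1}$) to conclude that $\beta_i(t)=1$ for all $i$; since the $\beta_i$ form a basis of the root lattice, $t$ lies in the center of $\widetilde G$, which is finite. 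The two arguments are essentially dual: both amount to the invertibility of $\mathrm{Id}-c$ over $\mathbb Q$, and both finish by the same dimension count for a torus morphism with finite kernel (a point you rightly flag and handle). Your version is more self-contained, resting only on a standard property of Coxeter elements rather than on the cited lemma of Yang--Zelevinsky; the paper's version yields the slightly finer information that $(\mathrm{Id}-c)$ carries the weight lattice onto the root lattice and that $\ker\phi_c$ sits inside the center, though neither refinement is needed for surjectivity.
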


\begin{proof}
It is enough to prove that the kernel of $\phi_{c}$ is finite.
We can choose a reduced expression $c\,=\,s_{\alpha_{1}}s_{\alpha_{2}}\cdots
s_{\alpha_{n}}$ for $c$ such that $\{\alpha_{1}\, , \alpha_{2}\, , \cdots\, ,
\alpha_{n} \}$ is the set of simple roots labeled in some ordering. 
Let $\beta_{i}\,=\,s_{\alpha_{1}}\cdots s_{\alpha_{i-1}}(\alpha_{i})$. 
Then, the set $\{\beta_{1}\, , \beta_{2}\, , \cdots \, ,\beta_{n} \}$
is the set of positive roots which are made negative by $c^{-1}$.

By \cite[p. 862, Lemma 2.1]{YZ}, we have $\omega_{i}-c(\omega_{i})\,=\,\beta_{i}$.
Now, let $t$ be an element of the kernel of $\phi_{c}$. Then, 
$\beta_{i}(t)\,=\,1$ for every $i\,=\,1\, , 2\, , \cdots\, , n$. 
Hence,
$$
{\rm kernel}(\phi_{c})\, \subset\, \bigcap_{i=1}^n {\rm kernel}(\beta_{i})\, .
$$
Since $\{\beta_{1}\, , \beta_{2}\, , \cdots\, , \beta_{n}\}$ is a basis of the root
lattice of $\widetilde{G}$ with respect to $\widetilde{T}$, the 
kernel of $\phi_{c}$ lies in the center of $\widetilde{G}$. Thus, it is finite.
\end{proof}

Now, let $\sigma$ be the involution of $G\times G$ defined by 
$\sigma(x,y)\,=\,(y,x)$. Note that the diagonal subgroup $\Delta(G)$
of $G\times G$ is the subgroup of fixed points, 
$T\times T$ is a $\sigma$-stable maximal torus of $G\times G$ and
$B\times B^{-}$ is a Borel subgroup having the property that 
$\sigma(\alpha)\,\in\, -R^{+}(B\times B^{-})$
for every $\alpha \,\in\, R^{+}(B\times B^{-}).$

Let $\overline{G}$ denote the wonderful compactification of the group $G$, where $G$ is 
identified with the symmetric space $(G\times G)/\Delta(G)$.

Now, consider the action of $\widetilde{G}$ on $\widetilde{G}$ by conjugation. 
We note that $\widetilde{T}$ is stable under the action of $N_{\widetilde{G}}
(\widetilde{T}).$

It is proved in \cite{St1} that the restriction 
$$
\mathbb{C}[\widetilde{G}]^{\widetilde{G}}\,\longrightarrow \,
\mathbb{C}[\widetilde{T}]^{W(G,T)}
$$
is an isomorphism, and the latter is a polynomial ring.
Hence we have the Steinberg map
$$
\tau\,:\, \widetilde{G}\,\longrightarrow\, \widetilde{T}/W(G,T)\,=\,{\mathbb A}^{n}\, .
$$ 

Let $F$ be the fiber of the Steinberg map $\tau$ containing a representative $n_{c}$ of 
$c$ in $ N_{\widetilde{G}}(\widetilde{T})$. By an abuse of notation, 
we denote by $n_{c}$ the image of  $n_{c}$ in $N_{G}(T)$. Let $F^{\prime}$ be the image 
of $F$ in $G,$ and let $\mathcal{Z} \,= \,Z_1\cup Z_2,$ where $Z_1$ is 
the closure of $F^{\prime}$ in $\overline G$ and $Z_2$ is the unique 
closed $G\times G$ orbit in $\overline G.$

\begin{thm}\label{thm2}
Let $E$ be a $\widetilde{G}\times \widetilde{G}$--equivariant vector bundle on
$\overline G$. Then, $E$ is nef (respectively, ample) if and only if the 
restriction of $E$ to $\mathcal{Z}$ is nef (respectively, ample). Similarly,
$E$ is trivial if and only if its restriction to $\mathcal{Z}$ is so.
\end{thm}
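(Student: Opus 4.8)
The plan is to mimic the strategy of Theorem \ref{thm1}, now working on $\overline{G}$ with the group $\widetilde{G}\times\widetilde{G}$ acting via its projection to $G\times G$. As in the earlier proof, the restriction of a nef (resp. ample, resp. trivial) bundle is again nef (resp. ample, resp. trivial), so only the ``if'' direction needs work. For the nefness statement, I would again use one-parameter subgroups of the diagonalizable group $\widetilde{T}\times\widetilde{T}$ (the inverse image of the $\sigma$-stable maximal torus $T\times T$) to degenerate an arbitrary irreducible curve $\widetilde{C}$ in $\mathbb{P}(E)$ to a cycle $\widetilde{C}_N$ whose image in $\overline{G}$ is a $(T\times T)$-stable curve; since $\mathcal{O}_{\mathbb{P}(E)}(1)$ has the same degree on linearly equivalent cycles, and is relatively ample over $\overline{G}$, it suffices to show that every $(T\times T)$-stable irreducible curve in $\overline{G}$ can be moved by $W(G\times G, T\times T)=W(G,T)\times W(G,T)$ into $\mathcal{Z}$. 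Granting that, the ample case follows exactly as in Theorem \ref{thm1}: there are only finitely many $(T\times T)$-stable curves, so $\mathrm{Sym}^n(E)\otimes L^{-1}$ is nef for a fixed ample $L$ and large $n$, giving ampleness of $E$ by \cite[p. 67, Proposition 2.4]{Ha}. The trivial case is also identical: $E$ and $E^*$ are both nef, so \cite[p. 311, Theorem 1.18]{DPS} and \cite[p. 40, Corollary 3.10]{Si} produce a flat holomorphic connection, and $\overline{G}$ is simply connected (it is unirational, \cite[p. 483, Proposition 1]{Se}), forcing $E$ to be trivial.

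So the real content is the orbit-combinatorial claim: every $(T\times T)$-stable irreducible curve $C$ in $\overline{G}$ lies in $(W(G,T)\times W(G,T))\cdot\mathcal{Z}$. Here I would invoke Lemma \ref{lem5} applied to the symmetric space $(G\times G)/\Delta(G)$ with involution $\sigma(x,y)=(y,x)$: every $(T\times T)$-stable curve is a $W(G,T)\times W(G,T)$-translate either of a curve $C_\alpha=\overline{U_\alpha s_\alpha z}$ for $\alpha\in R^+(B\times B^-)\setminus R(L)$, or of a curve $C_{z,\gamma}$ lying in $Y=\overline{(T\times T)\Delta(G)/\Delta(G)}=\overline{T}$, the closure in $\overline{G}$ of the maximal torus $T$. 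For the second family: the closed orbit $Z_2$ contains $(T\times T)$-translates that account for the curves through $z$, and in any case $Z_1=\overline{F'}$ contains $\overline{T}$ since, as in Lemma \ref{lem7}, the Coxeter-element fiber $F'$ meets $T$ in a dense subset (the map $\phi_c$ being surjective by Lemma \ref{lem7}, the $G$-conjugacy classes of $n_c t$ sweep out an open subset whose closure contains $T$, hence $\overline{T}\subset Z_1$) — so those curves are already in $\mathcal{Z}$. For the first family, I would run the same translation trick as in Lemma \ref{lem6}: using that $s_\alpha s_{\sigma(\alpha)}\in W(\Delta(G), (T\times T\cap\Delta(G))^0)$ and that $z\in Y$, one shows $C_\alpha$ can be $W$-translated into $\overline{B\times B^-\cdot \mathrm{pt}}$ — but here one must check that this $B\times B^-$-orbit closure, or a $W$-translate of it, lands inside $Z_1\cup Z_2$, which is where the specific choice of the Coxeter Steinberg fiber enters.

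The step I expect to be the main obstacle is precisely verifying that $Z_1=\overline{F'}$ is large enough to absorb (after $W\times W$-translation) all the curves of type $C_\alpha$ from Lemma \ref{lem5}; equivalently, that $F'$ is not merely some closed subvariety of $G$ but one whose closure in $\overline{G}$ contains a $W\times W$-translate of each of these rational curves. The key inputs should be: the identity $\omega_i-c(\omega_i)=\beta_i$ from \cite[p. 862, Lemma 2.1]{YZ} together with Lemma \ref{lem7}, which together force $F'$ to project dominantly onto $\overline{T}/W$ and to meet the big cell in a subvariety whose closure picks up the relevant $U_\alpha$-curves; and the structure of $B$-orbit closures in $\overline{G}$ from \cite{BJ} and \cite{DP}. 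Once $\overline{T}\subseteq Z_1$ and the type-$C_\alpha$ curves are handled, the finiteness of $(T\times T)$-stable curves and the three-part argument above close the proof. I would therefore organize the write-up as: (i) reduce to the orbit claim exactly as in Theorem \ref{thm1}; (ii) prove $\overline{T}\subseteq\mathcal{Z}$ using Lemma \ref{lem7}; (iii) handle the $C_\alpha$-curves by the translation argument of Lemma \ref{lem6}; (iv) conclude nef, then ample, then trivial verbatim as before.
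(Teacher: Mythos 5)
Your overall skeleton matches the paper's proof: reduce, exactly as in Theorem \ref{thm1}, to showing that every $T\times T$ stable curve in $\overline{G}$ lies in $(W\times W)\cdot\mathcal{Z}$, then run the nef/ample/trivial arguments verbatim. But you have misassigned the roles of $Z_1$ and $Z_2$, and the step you flag as the ``main obstacle'' is not an obstacle at all, while the step you treat as routine contains a false claim. For the type~1 curves $C_\alpha=\overline{U_\alpha s_\alpha z}$ there is nothing to translate into a $B\times B^-$-orbit closure and no role for the Steinberg fiber: the base point $z$ is the unique $B\times B^-$-fixed point, which lies in the closed $G\times G$-orbit $Z_2$, and $Z_2$ is closed and $G\times G$-stable, so the curves $\overline{(\{1\}\times U_{-\alpha})(1,s_\alpha)z}$ and $\overline{(U_\alpha\times\{1\})(s_\alpha,1)z}$ sit inside $Z_2$ outright. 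That is the entire reason $Z_2$ is included in $\mathcal{Z}$.

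The genuine content is the type~2 (toric) curves, and here your justification breaks down. You claim $\overline{T}\subset Z_1$ because ``$F'$ meets $T$ in a dense subset'' and because ``the $G$-conjugacy classes of $n_ct$ sweep out an open subset.'' Both assertions are wrong: a single Steinberg fiber meets $\widetilde{T}$ in only finitely many points (the $W$-orbit of the semisimple part), and the elements $n_ct$ for varying $t\in T$ lie in \emph{different} fibers of $\tau$, since $\tau(n_ct)$ varies with $t$. The correct argument, which is what Lemma \ref{lem7} is for, goes the other way around: the conjugates $tn_ct^{-1}$, $t\in T$, all lie in the \emph{same} fiber $F$ (as $\tau$ is conjugation-invariant), and surjectivity of $\phi_c(t)=tn_ct^{-1}n_c^{-1}$ shows that $\{tn_ct^{-1}\mid t\in T\}$ is the entire coset $Tn_c$. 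Hence $\overline{Tn_c}\subset Z_1$; this is not $\overline{T}$ itself but a $(W\times W)$-translate of $Y=\overline{T}$ (since $n_c\in N_G(T)$), which is all that is needed because the goal is containment in $(W\times W)\cdot\mathcal{Z}$. So your conclusion is reachable, but the bridge you build to it does not hold; you need to replace the density claim by the coset computation above, and you can delete the entire discussion of absorbing the $C_\alpha$ curves into $Z_1$.
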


\begin{proof}
Set $W \,=\, W(G,T).$ By the proof of Theorem \ref{thm1}, it is sufficient to prove
that every $T\times T$ stable curve in $\overline G$ lies in $(W\times W)\cdot \mathcal{Z}$. 
It is easy to see that, for every root $\alpha\,\in\, R^{+}(B)$, the 
$T\times T$ stable curve $\overline{({\{1\}\times U_{-\alpha})}\cdot(1,s_{\alpha}) \cdot z}$ lies in 
$Z_{2}$. Similarly, $\overline{(U_{\alpha}\times \{1\}) \cdot 
(s_{\alpha},1)\cdot z}$ lies in $Z_{2}$ for every $\alpha\,\in\, R^{+}(B)$. 
Thus, every $T\times T$ stable curve of type 1 in Lemma \ref{lem5} lies in 
$(W\times W)\cdot \mathcal{Z}$. 

On the other hand, by Lemma \ref{lem7}, the homomorphism $\phi_{c}$ is onto
and hence, the closure of $Tn_{c}\,=\,\{tn_{c}t^{-1}\,\mid\, t\in T\}$ in $\overline G$ is
contained in $Z_{1}$. Therefore every $T\times T$ stable curve
of type 2 in Lemma \ref{lem5} as well 
lies in $(W\times W)\cdot \mathcal{Z}$. This completes the proof of the theorem.
\end{proof}

\section*{Acknowledgements}
We thank the referee for helpful comments. The first--named author thanks the Institute
of Mathematical Sciences for hospitality while this work was carried out. He also
acknowledges the support of the J. C. Bose Fellowship. The second--named author
would like to thank the Infosys Foundation for the partial support.

\end{document}